\theoremstyle{plain}
\newtheorem{theorem}{Theorem}[section]
\newtheorem{corollary}[theorem]{Corollary}
\newtheorem{lemma}[theorem]{Lemma}
\newtheorem{proposition}[theorem]{Proposition}
\newtheorem{definition}[theorem]{Definition}
\theoremstyle{remark}
\numberwithin{equation}{section}
\newcommand{\dbullet}{\,\raisebox{-1.4pt}{\( \bullet \)}\,}
\newcommand{\dowd}{\, .\,}
\newcommand{\res}{\upharpoonright}
\begin{document}
\title[Ramsey theory]{Abstract approach to finite Ramsey theory\\
and a self-dual Ramsey theorem}

\author{S{\l}awomir Solecki}

\thanks{Research supported by NSF grant DMS-1001623.}

\address{Department of Mathematics\\
University of Illinois\\
1409 W. Green St.\\
Urbana, IL 61801, USA}

\email{ssolecki@math.uiuc.edu}

\subjclass[2000]{05D10, 05C55}

\keywords{Ramsey theory, Hales--Jewett theorem, Graham-Rothschild
theorem, self-dual Ramsey theorem}

\begin{abstract} We give an abstract approach to finite Ramsey theory and
prove a general Ramsey-type theorem. We deduce from it a self-dual
Ramsey theorem, which is a new result naturally generalizing both
the classical Ramsey theorem and the dual Ramsey theorem of Graham
and Rothschild. In fact, we recover the pure finite Ramsey theory
from our general Ramsey-type result in the sense that the classical
Ramsey theorem, the Hales--Jewett theorem (with Shelah's bounds),
the Graham--Rothschild theorem, the versions of these results for
partial rigid surjections due to Voigt, and the new self-dual Ramsey
theorem are all obtained as iterative applications of the general
result.
\end{abstract}

\maketitle

\section{Introduction}\label{S:intro}

\subsection{Abstract approach to Ramsey theory and its
applications}\label{Su:abap} We give an abstract approach to pure
(unstructured) finite Ramsey theory. The spirit of the undertaking
is similar to Todorcevic's approach to infinite Ramsey theory
\cite[Chapters 4 and 5]{To10}, even though, on the technical level,
the two approaches are different. There are three main points
to the present paper. The first one is the existence of a single,
relatively simple type of algebraic structure, called Ramsey domain
over a composition space (or a normed composition space), that underlies Ramsey
theorems. The second point is the
existence of a single Ramsey theorem, which is a result about the
algebraic structures just mentioned. Particular Ramsey theorems are
instances, or iterative instances, of this general result for
particular Ramsey domains, much like theorems about, say, modules
have particular instances for concrete modules. The latter point
opens up a possibility of classifying concrete Ramsey situations, at
least in limited contexts; see Section~\ref{S:pro}.
Finally, the third main point
is a new concrete Ramsey theorem obtained using the abstract approach to Ramsey theory.
We call this theorem the self-dual Ramsey theorem. It is a common generalization of
the classical Ramsey theorem and the dual Ramsey theorem.
As far as proofs of the known results are concerned, one advantage
of the approach given here is its uniformity.
Our approach also provides a hierarchy of the Ramsey
results according to the number of times the abstract Ramsey theorem
is applied in their proofs. For example, the classical Ramsey
theorem requires one such application, the Hales--Jewett theorem
requires two applications, the Graham--Rothschild theorem three, and the self-dual Ramsey theorem four.

The following vague observation is at the starting point of the abstract approach to Ramsey theory.
(This observation may not be apparent at this point, but it will become clear with the reading of the paper.)
Roughly speaking, a Ramsey-type theorem is a statement of the
following form.
We are given a set $S$ chosen arbitrarily from some
fixed family $\mathcal S$ and a number of colors $d$. We find a set
$F$ from another fixed family $\mathcal F$ with a ``scrambling"
function, usually a type of composition, defined on $F\times S$:
\[
F\times S \ni (f,x)\to f\dowd x\in F\dowd S.
\]
The arrangement is such that for each $d$-coloring of $F\dowd S$
there is $f\in F$ with $f\dowd S$ monochromatic. Our undertaking consists of finding a general,
algebraic framework in which we isolate an abstract pigeonhole
principle and prove that it implies a precise version of the above
abstract Ramsey-type statement.

The paper is structured as follows.

Section~\ref{S:intro}: Later in this introduction, we recall the central theorems of the finite unstructured Ramsey theory and place our new concrete
Ramsey result---the self-dual Ramsey theorem---in this context.

Section~\ref{S:injsur}: We fix the notions which are used to state the concrete Ramsey results in this paper.
These notions are formulated in the language of injections and surjections. We show how to translate
Ramsey statements as in Section~\ref{S:intro} to statements in this language.

Section~\ref{S:alg}: We define the new algebraic notions needed to
phrase and prove the abstract Ramsey theorem. The progression of notions is as follows:
\begin{enumerate}
\item[---] {\em actoid}, a most basic notion of action;

\item[---] {\em set actoid} over an actoid, a lift of the operations on an actoid to subsets;

\item[---] {\em composition spaces}, actoids with an added operator;

\item[---] {\em Ramsey domains}, set actoids over composition spaces fulfilling additional conditions.
\end{enumerate}

Section~\ref{S:RP}: Using the algebraic notions introduced in the previous section, we phrase the main Ramsey theoretic conditions. So the first
half of this section contains formulations of:
\begin{enumerate}
\item[---] the {\em Ramsey property} for Ramsey domains;

\item[---]the {\em pigeonhole principle} for Ramsey domains.
\end{enumerate}
In the second half of the section, we prove in Theorem~\ref{T:main2} and Corollary~\ref{C:mainco} that, under mild assumptions,
the pigeonhole principle implies the Ramsey property for Ramsey domains.

Section~\ref{S:loc}: We prove that, in many situations, to get a Ramsey theorem it
suffices to check only a localized, and therefore easier, version of
the abstract pigeonhole principle. We start this section by formulating:
\begin{enumerate}
\item[---] the {\em local pigeonhole principle} for Ramsey domains,
\end{enumerate}
and follow it by defining
\begin{enumerate}
\item[---] {\em normed composition spaces}, composition spaces with a norm to a partial ordering.
\end{enumerate}
After that we prove Theorem~\ref{T:hp} and Corollary~\ref{C:mainco2}
that show that the local pigeonhole principle, under mild conditions, implies the
Ramsey property for Ramsey domains over normed composition spaces.

Section~\ref{S:prop}: We show two results allowing us to propagate the pigeonhole
principle. In the first one, Proposition~\ref{P:prod}, we get the
pigeonhole principle for naturally defined products assuming it
holds for the factors. The second result, Proposition~\ref{P:inter},
involves a notion of interpretability and establishes preservation
of the pigeonhole principle under interpretability.

Section~\ref{S:coram}: We give examples of composition spaces and Ramsey domains. More
examples can be found in papers \cite{So3} and \cite{So4}. 
(Note that the terminology in \cite{So3} differs somewhat from the one in the present paper.)

Section~\ref{S:appl}: This section contains applications of the abstract Ramsey approach
to concrete situations.
As a consequence of the general theory we
obtain a new self-dual Ramsey theorem. We give its statement and
explain its relationship with other results in
Subsection~\ref{Su:self} below. Here, let us only mention one interesting
feature of the proof of this theorem: the role of the pigeonhole
principle is played by the Graham--Rothschild theorem. We also give other applications of the general
theory to concrete examples. We show how to derive as iterative
applications of the abstract Ramsey result the classical Ramsey
theorem, the Hales--Jewett theorem, the Graham--Rothschild theorem
as well as the versions of these results for partial rigid
surjections due to Voigt. We note that in the proof of the
Hales--Jewett theorem the bounds we obtain on the parameters
involved in it turn out to be primitive recursive and are
essentially the same as Shelah's bounds from \cite{Sh88}. We will,
however, leave it to the reader to check the details of this
estimate. More applications of the abstract approach to Ramsey theory
involving finite trees can be found in \cite{So3} and \cite{So4}.
(Note again that the terminology in the present paper differs from that in \cite{So3}.)

Section~\ref{S:wal}: In Theorem~\ref{T:color}, we give an interesting
example for which Ramsey theorem fails. The objects that
are being colored can be viewed as Lipschitz surjections with
Lipschitz constant $1$ between initial segments of the set of natural numbers.
This example is motivated by considerations in topological dynamics.

Section~\ref{S:pro}: We make concluding remarks and state a problem on
classifying Ramsey theorems in a natural, but limited, set-up.

It may be worthwhile to point out that, on the conceptual level, the elegant approach of
Graham, Leeb and Rothschild \cite{GrLeRo72} and of Spencer
\cite{Sp79} to finite Ramsey theorems for spaces is very much
different from the approach presented here. The differences on the
technical level are equally large. One main such difference is that,
unlike here, the setting of \cite{GrLeRo72} and \cite{Sp79} has a
concrete pigeonhole principle built into it, which in that approach
is the Hales--Jewett theorem.

The pure Ramsey theory, which is the subject matter of this paper,
is a foundation on which the Ramsey theory for finite structures is
built, but is not a part of it. Consequently, the methods of the
present paper have nothing directly to say about the structural
Ramsey theory as developed for relational structures by
Ne\v{s}et\v{r}il and R\"{o}dl in \cite{NeRo}, \cite{NeRo2},
\cite{NeRo3}, and by Pr\"{o}mel in \cite{Pr85} and, more recently,
for structures that incorporate both relations and functions by the
author in \cite{So}, \cite{So2}.

\subsection{Self-dual Ramsey theorem and finite Ramsey theory}\label{Su:self}

We consider $0$ to be a natural number. As is usual, we adopt the
convention that for a natural number $n$,
\[
[n] = \{ 1, \dots, n\}.
\]
In particular, $[0] = \emptyset$.

The aim of this subsection is to survey the fundamental results of finite Ramsey theory, in particular,
we recall the classical Ramsey theorem and the dual Ramsey theorem
of Graham and Rothschild. In their context, we state the new self-dual Ramsey theorem. Later we recall further Ramsey
theoretic results that will be relevant in the sequel.

The classical Ramsey theorem can be stated as follows.

\medskip
\noindent{\bf Ramsey's Theorem.} {\em Given the number of colors $d$ and natural numbers
$k$ and $l$, there exists a natural number $m$ such that for each
$d$-coloring of all subsets of $[m]$ of size $k$ there
exists a subset $B$ of $[m]$ of size $l$  such that
\[
\{ A\mid A\subseteq B \hbox{ and } A \hbox{ has size }k\}
\]
is monochromatic.}
\medskip

The dual Ramsey theorem, due to Graham and Rothschild \cite{GrRo71}, concerns partitions and can
be stated as follows. Recall that a partition $\mathcal P$ is {\em coarser than} a partition $\mathcal Q$
if each set in $\mathcal P$ is a union of some sets in $\mathcal Q$.

\medskip
\noindent{\bf Dual Ramsey Theorem.} {\em Given the number of colors $d$ and natural numbers $k, l >0$,
there exists a natural number $m$ such that for each
$d$-coloring of all partitions of $[m]$ with $k$ pieces
there exists a partition $\mathcal Q$ of $[m]$ with $l$ pieces such that
\[
\{ {\mathcal P}\mid {\mathcal P} \hbox{ a partition with $k$ pieces coarser than }{\mathcal Q}\}
\]
is monochromatic.}
\medskip

It is natural to ask if a ``self-dual" Ramsey theorem exists that
combines the two statements above. We formulate now such a self-dual
theorem. We will be coloring pairs consisting of a partition and a set
interacting with each other in a certain way.  Let $\mathcal R$ be a partition
of $[n]$ and let $C$ be a subset of $[n]$. Let $m\in {\mathbb N}$. We say that
$({\mathcal R}, C)$ is an $m$-{\em connection} if $\mathcal R$ and $C$ have
$m$ elements each and, upon listing ${\mathcal R}$ as $R_1, \dots, R_m$
with $\min R_i <\min R_{i+1}$ and $C$ as $c_1, \dots, c_m$ with
$c_i<c_{i+1}$, we have $c_i\in R_i$ for $i\leq m$ and $c_i<\min
R_{i+1}$ for $i<m$. We say that an $l$-connection $({\mathcal Q},
B)$ is an $l$-{\em subconnection} of an $m$-connection $({\mathcal
R}, C)$ if ${\mathcal Q}$ is a coarser partition than ${\mathcal R}$
and $B\subseteq C$.

Here is the self-dual Ramsey
theorem. Its reformulation in terms of surjections and injections is Theorem~\ref{T:RGR}. It is proved in Subsection~\ref{Su:selpr}.

\medskip
\noindent{\bf Self-dual Ramsey Theorem.} {\em Let $d>0$. For each $k,l\in {\mathbb N}$, $k,l>0$, there exists
$m\in {\mathbb N}$ such that for each $d$-coloring of all
$k$-subconnections of an $m$-connection $({\mathcal R}, C)$ there
exists an $l$-subconnection $({\mathcal Q}, B)$ of $({\mathcal R},
C)$ such that all $k$-subconnections of $({\mathcal Q}, B)$ get the
same color.}
\medskip

Ramsey's Theorem is just the theorem above for colorings
that do not depend on the first coordinate; Dual Ramsey Theorem
theorem is the above theorem for colorings that do not depend on the
second coordinate.

We recall now some other results of finite Ramsey theory, partly to remind the reader of the main results
of the theory, and partly because we will need them in our proof of the self-dual Ramsey theory. We will
illustrate the abstract approach to Ramsey theory developed in this paper by proving these results. This
exercise is to support our assertion that all the unstructured Ramsey theoretic results, including the self-dual
Ramsey theorem, can be obtained as iterative instances of the abstract Ramsey theorem.

We state these results here using the language of parameter sets as is done in Ne{\v s}et{\v r}il's
survey \cite{Ne95}. Later, in Section~\ref{S:injsur}, we will restate them in the language of injections and
surjections. Let $A, l, n\in {\mathbb N}$ with $A$, $l$ not both equal to $0$. By an {\em
$l$-dimensional $A$-parameter set on $n$} we understand a pair of the form
\begin{equation}\label{E:par}
V = (g, {\mathcal G}),
\end{equation}
where $\mathcal G$ consists of $l$ non-empty, pairwise disjoint
subsets of $[n]$ and $g\colon [n]\setminus \bigcup {\mathcal G} \to
[A]$.
Note that if $V$ is of dimension $0$, then ${\mathcal G}=\emptyset$, and
$V = (g, \emptyset)$ can, and will, be identified with the function $g\colon [n]\to A$.
The set of all such functions is denoted by $A^n$. Note also that if $A=0$, then there is
only one choice for $g$---the empty function---and $V$ can be identified with $\mathcal G$, which
in this case is a partition of $[n]$ into $l$ pieces. Thus,
$0$-parameter sets are identified with partitions.

A $k$-dimensional $A$-parameter set on $n$
\[
U = (f, {\mathcal F})
\]
is an {\em $A$-parameter subset of $V$} as in \eqref{E:par} if each set in
${\mathcal F}$ is the union of some sets in $\mathcal G$, $f$
extends $g$, and $f$ restricted to each set in $\mathcal G$ included
in its domain is constant. In the particular case, when $U$ is $0$-dimensional and
is identified with $f\colon [n]\to A$, we say that $f$ is {\em a function in} $V$.
This translates to $f$ being an extension of $g$ to $[n]$ that is constant on each set in $\mathcal G$.
In the particular case, when $V$ is a $0$-parameter set and, therefore, so is $U$, and they are both
identified with the partitions $\mathcal G$ and $\mathcal F$, respectively, we have that $\mathcal F$ is
coarser than $\mathcal G$.

The first theorem is from \cite{HeJe63}.

\medskip
\noindent{\bf Hales--Jewett Theorem.} {\em Fix $A\in {\mathbb N}$, $A>0$, and $d>0$. For each $l \in
{\mathbb N}$ there exists $m\in {\mathbb N}$ such that for each
$d$-coloring of $A^m$ there exists an $l$-dimensional $A$-parameter
set $U$ such that all functions in $U$ get the same color.}
\medskip

The second theorem is from \cite{GrRo71}.

\medskip
\noindent{\bf Graham--Rothschild Theorem.} {\em Fix $A\in {\mathbb N}$ and $d>0$. Given $k, l\in
{\mathbb N}$, with $k,l>0$ if $A=0$, there exists $m\in {\mathbb N}$ such that for each
$d$-coloring of all $k$-dimensional $A$-parameter subsets of an $m$-dimensional
$A$-parameter set $V$ there exists an $l$-dimensional $A$-parameter
subset $U$ of $V$ such that all $k$-dimensional $A$-parameter
subsets of $U$ get the same color.}
\medskip

Note that the Dual Ramsey Theorem is an instance of the Graham--Rothschild Theorem for $A=0$ and $k,l>0$.

We will also need the ``partial" versions of the above results. These partial versions were
established by Voigt in \cite{Vo80}. In order to state them, we need to generalize the definitions above.
A $k$-dimensional $A$-parameter set on $m$
\[
U = (f, {\mathcal F})
\]
is a {\em partial $A$-parameter subset of $V$} as in \eqref{E:par} if $m=n$ or
$m+1 =\min a$ for some $a\in {\mathcal G}$, and $U$ is an $A$-parameter subset of
the $A$-parameter set
\[
\left( g\res ([m]\setminus \bigcup {\mathcal G}),\, \{ b\cap [m]\mid b\in {\mathcal G}, \, b\cap [m]\not=\emptyset\} \right).
\]
Note that each $A$-parameter subset of $V$ is a partial $A$-parameter subset of $V$. When $U$ is $0$-dimensional
and is identified with the function $f\colon [m]\to A$,  we say that $f$ is {\em a partial function in} $V$ as in \eqref{E:par}. The notion
of partial function in $V$ can be rephrased by saying that $m=n$ or
$m+1=\min a$ for some $a\in {\mathcal G}$, $f$ extends $g\res ([m]\setminus \bigcup {\mathcal G})$ to $[m]$,
and $f$ is constant on each $b\cap [m]$ for $b\in {\mathcal G}$.

Let $A^{\leq m}$ stand for the set of all
functions $f\colon [m']\to A$ with $m'\leq m$.

\medskip
\noindent{\bf Hales--Jewett Theorem, Voigt's version.} {\em Fix $A\in {\mathbb N}$ and $d>0$. For each $l \in
{\mathbb N}$ there exists $m\in {\mathbb N}$ such that for each
$d$-coloring of $A^{\leq m}$ there exists an $l$-dimensional $A$-parameter
set $U$ on some $m'\leq m$ such that all partial functions in $U$ get the same color.}

\medskip
\noindent{\bf Graham--Rothschild Theorem, Voigt's version.} {\em Fix $A\in {\mathbb N}$ and $d>0$. For each $k, l\in
{\mathbb N}$ there exists $m\in {\mathbb N}$ such that for each
$d$-coloring of all $k$-dimensional partial $A$-parameter subsets of an $m$-dimensional
$A$-parameter set $V$ there exists an $l$-dimensional partial $A$-parameter
subset $U$ of $V$ such that all $k$-dimensional partial $A$-parameter
subsets of $U$ get the same color.}

\subsection{Walks} Uspenskij in \cite{Us00} asked for an identification of the universal minimal flow of
the homeomorphism group of the generic compact connected metric space called the pseudo-arc. In view
of the papers by Irwin and the author \cite{IrSo05} and by Kechris, Pestov, and Todorcevic \cite{KePeTo05},
it became apparent to the author that such an identification can be accomplished if a certain Ramsey statement were true. However, in Section~\ref{S:wal}, we
prove that this Ramsey statement is false. We state this theorem below after we have defined the objects involved in it.
The theorem gives an interesting and natural class of rigid surjections (for a definition see Section~\ref{S:injsur}) for
which Ramsey theorem fails. The coloring that make it fail was found as a by-product of an analysis of the Ramsey statement
with the abstract approach.

A {\em walk} is a function $s\colon [n]\to [m]$ that is surjective and such that $s(1)=1$ and $|s(i+1)-s(i)|\leq 1$ for
all $i\in [n-1]$. The following theorem is proved as Theorem~\ref{T:color}.

\medskip
\noindent{\bf Coloring of Walks Theorem.}
{\em Let $m\geq 3$. There exists a coloring with two colors of all walks from
$[m]$ to $[3]$  such that for each walk $t\colon [m]\to [6]$ the set
\[
\{ s\circ t\mid s\colon [6]\to [3]\hbox{ a walk}\}
\]
is not monochromatic.}

\section{The language of injections and surjections; reformulations of Ramsey results}\label{S:injsur}

In the paper, we consistently use the language
of rigid surjections and increasing injections rather than that of
partitions and sets. (This language was proposed in \cite{PV1}.) In
our opinion, this choice is more satisfying from the theoretical
point of view and it easily accommodates
objects coming from topology such as walks in
Section~\ref{S:wal}. Note, however, that the abstract approach
is also applicable to the partition-and-set formalism. A
canonical way of translating statements in one language into the
other is explained in Subsection~\ref{Su:transl}.

Recall that for $N\in {\mathbb N}$, we let
\[
[N] = \{ 1, \dots, N\}.
\]
In the sequel, we use letters $K,\,L,\,M,\,N,\,P,\,Q$, possibly with
subscripts, to stand for natural numbers.

\subsection{Classes of injections and surjections}\label{Su:injsur}

We fix here some notation and some notions needed in the sequel.

By an {\em increasing injection} we understand a {\em strictly}
increasing function from $[K]$ to $\mathbb N$ for some $K\in
{\mathbb N}$. Let
\begin{equation}\label{E:inin}
{\rm II}= \{ i\mid i\hbox{ is an increasing injection}\}.
\end{equation}
For $K\leq L$, let
\[
\binom{L}{K} = \{ i\in {\rm II}\mid i\colon [K] \to [L]\}.
\]
Since an increasing injection from $[K]$ to $[L]$ is determined by,
and of course itself determines, its image, that is, a $K$ element subset
of $[L]$, the set $\binom{L}{K}$ defined above can be thought of as
the set of all $K$ element subsets of $[L]$.

Let
\[
{\rm S} = \{ v\mid \exists K,L\; (K\leq L\hbox{ and } v\colon
[L]\to [K] \hbox{ is a surjection})\}.
\]
We adopt the convention that for $v\in S$ writing $v\colon [L]\to
[K]$ signifies that $v$ is onto $[K]$.

A {\em rigid surjection} is a
function $s\colon [L]\to [K]$ that is surjective and such that for
each $y\in [L]$ there is $x\in [K]$ with $s([y]) = [x]$.
In other words, for each $y\in [L]$, we have
\[
s(y)\leq 1+\max_{1\leq x<y} s(x)
\]
with the convention that the maximum over the empty set is $0$. Note
that the notion of rigid surjection is obtained simply by dualizing
the notion of increasing injection: increasing injections are {\em
injections} such that {\em preimages} of initial segments are
initial segments, while rigid surjections are {\em surjections} such
that {\em images} of initial segments are initial segments.
Let
\begin{equation}\notag
{\rm RS} = \{ s\in {\rm S} \mid s \hbox{ is a rigid surjection}\}.
\end{equation}

By a {\em walk} we understand a function $s\colon [L]\to [K]$ such that $s(1)=1$ and
for each $x\in [L-1]$,
\[
s(x)-1\leq s(x+1)\leq s(x)+1.
\]
Clearly each walk is a rigid surjection. Set
\[
W = \{ s\mid s\hbox{ is a walk}\}.
\]

An {\em increasing surjection} is a function $p\colon [L]\to [K]$ that
is surjective and such that for $y_1, y_2\in [L]$ if $y_1\leq y_2$,
then $p(y_1)\leq p(y_2)$, so strictly speaking $p$ is a
non-decreasing surjection. Clearly each increasing surjection is a walk. Let
\begin{equation}\label{E:IS}
{\rm IS} = \{ p \in {\rm S}\mid p\hbox{ is an increasing
surjection}\}.
\end{equation}

We also have a notion that combines surjections and injections.
Let $K, L$ be natural numbers. We call
a pair $(s,i)$ a {\em connection between $L$ and $K$} if $s\colon
[L]\to [K]$, $i\colon [K]\to [L]$ and for each $x\in [K]$
\[
s(i(x)) = x\;\hbox{ and }\; \forall y<i(x)\; s(y)\leq x.
\]
In other words, $i$ is a left inverse of $s$ with the additional
property that at each $x\in [K]$ the value $i(x)$ is picked only
from among those elements of $s^{-1}(x)$ that are ``visible from
$x$," that is, from those $y'\in s^{-1}(x)$ for which
\[
s\upharpoonright (\{ y\mid y<y'\}) \leq x.
\]
We write
\[
(s,i)\colon [L]\leftrightarrow [K].
\]
It is easy to see that if $(s, i)$ is a connection, then $i$ is an
increasing injection and $s$ is a rigid surjection. Also for each
rigid surjection $s$ there is an increasing injection $i$ (usually
many such injections) for which $(s,i)$ is a connection, and for each
increasing injection $i$ there is a rigid surjection $s$ (again,
usually many such surjections) with $(s,i)$ a connection.

Finally, for technical reasons, we need the notion of augmented surjection, which are
ordered pairs whose elements are a rigid surjection and an
increasing surjection with these elements appropriately interacting
with each other. Let
\begin{equation}\notag
\begin{split}
{\rm AS} = \{ (s,p)\mid \exists K,L\in {\mathbb N}\;(&s, p\colon
[L]\to [K],\, p\in {\rm IS},\, s\leq p,\\
&\forall x\in [K]\; s(\max p^{-1}(x)) = x)\}.
\end{split}
\end{equation}
It is easy to see that $(s,p)\in {\rm AS}$ implies that $s$ is a
rigid surjection. Elements of ${\rm AS}$ are called {\em augmented
surjections}.

\subsection{The canonical composition of a rigid surjection and a function}\label{Su:canco}

In the sequel in various situations, we will require to compose functions with rigid surjections. It will always be
done in a particular way. This canonical composition will be a restriction of the usual composition to a certain initial segment. Here
is a precise definition.
Let $v\colon [L]\to [K]$ be a function and let $s\colon [N]\to
[M]$ be a rigid surjection. The {\em canonical composition} of $v$
and $s$, which we denote by $v\circ s$,
is defined if and only if $L\leq M$. In this case, let $N_0\leq N$ be the largest number such
that $s(y)\leq L$ for all $y\leq N_0$. Define
\begin{equation}\label{E:ccom}
v\circ s
\end{equation}
to be the usual composition of $v$ with $s\upharpoonright [N_0]$.
It is easy to see that $v\circ s$ is the restriction of the usual
composition of $v$ and $s$, which is a partial function on $[N]$, to the largest initial segment of $[N]$
on which this composition is defined. If $M=K$, then $v\circ s$ is the usual composition of $v$ and $s$.

Note that if $v$ is a surjection, then $v\circ s\colon
[N_0]\to [K]$ is a surjection. If $v$ is a rigid surjection, then
$v\circ s$ is a rigid surjection.
If $v$ and $s$ are walks, then so is $v\circ s$.
If $v$ and $s$ are
increasing surjections, then so is $v\circ s$.

It is easy to verify that if $v$ is a function, $s$, $t$ are rigid
surjections, and $(v\circ s)\circ t$ and $v\circ (s\circ t)$ are
both defined, then
\[
(v\circ s)\circ t= v\circ (s\circ t).
\]
This observation will be frequently used in the sequel.

\subsection{The self-dual Ramsey theorem and other Ramsey-type theorems in terms of injections and surjections}\label{Su:refor}

We present here reformulations in the language of surjections and injections of the Ramsey theorems, including
the self-dual Ramsey theorem, presented in Subsection~\ref{Su:self}. We explain how the translation works in Subsection~\ref{Su:transl}. .

Given connections $(s,i)\colon [L]\leftrightarrow [K]$ and
$(t,j)\colon [M]\leftrightarrow [L]$, define
\[
(t,j)\cdot (s,i) \colon [M]\leftrightarrow [K]
\]
as
\[
(s\circ t, j\circ i).
\]
Note that the orders of the compositions in the two coordinates are
different from each other. One sees easily that the composition of
two connections is a connection.

The following theorem is a reformulation of the self-dual Ramsey theorem from Subsection~\ref{Su:self}.
We think of it as the official statement of the theorem. Its proof is given in Subsection~\ref{Su:selpr}.

\begin{theorem}\label{T:RGR}
Let $d>0$ be a natural number. Let $K$ and $L$ be natural numbers.
There exists a natural number $M$ such that for each $d$-coloring of
all connections between $M$ and $K$ there is $(t_0,j_0)\colon
[M]\leftrightarrow [L]$ such that
\[
\{ (t_0,j_0)\cdot (s,i) \mid (s,i)\colon [L]\leftrightarrow [K]\}
\]
is monochromatic.
\end{theorem}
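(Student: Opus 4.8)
The plan is to obtain Theorem~\ref{T:RGR} as a single application of the abstract Ramsey theorem, Corollary~\ref{C:mainco} of Theorem~\ref{T:main2}, to an actoid of sets over a normed background built from connections, and to verify the required abstract pigeonhole principle (Definition~\ref{D:ph}) directly from the Graham--Rothschild theorem. This fits the hierarchy announced in the introduction: since Graham--Rothschild is itself reached by three applications of the abstract theorem, one further application with Graham--Rothschild supplying the pigeonhole accounts for the four applications attributed to the self-dual theorem.

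First I would set up the algebraic data. I would take $A$ and $X$ both to be the set of all connections, with the partial multiplication and the partial action both given by connection composition $(t,j)\cdot(s,i) = (s\circ t,\, j\circ i)$, and with the norm of a connection $[N']\leftrightarrow[N]$ read off from the natural numbers $N,N'$. The truncation $\partial$ should strip a connection down by one level---collapsing its largest value in the surjection coordinate and discarding the correspondingly marked point in the injection coordinate---and I would check that $\partial$ is a homomorphism for this structure and that connections are closed under the action, so that the lift to families $\mathcal F\ni F$ (sets of connections $[M]\leftrightarrow[L]$) and $\mathcal S\ni S$ (the set of all connections $[L]\leftrightarrow[K]$) is well defined and the induced map $S\ni(s,i)\mapsto (t_0,j_0)\cdot(s,i)$ agrees with the composition in the theorem.

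The delicate feature throughout is the self-dual, two-coordinate nature of a connection: composition is covariant in the injection coordinate ($j\circ i$) and contravariant in the surjection coordinate ($s\circ t$), so the truncation and the homomorphism property of the action must respect both directions at once. Here I expect the localized pigeonhole principle of Section~\ref{S:loc} (Theorem~\ref{T:hp} and Corollary~\ref{C:mainco2}) to be the efficient route, reducing the verification to adding a single level. The main obstacle is precisely this localized check: one must encode both the rigid-surjection coordinate and the visibility-constrained increasing-injection coordinate of a connection into the rigid-surjection, or parameter-word, data governed by Graham--Rothschild, and confirm that adding one level to a connection corresponds, under this encoding, to the one-step coloring statement that Graham--Rothschild guarantees. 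Getting the marked points of the injection coordinate to be tracked correctly by the surjection bookkeeping is the crux of the argument.

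Once the pigeonhole principle is established, the conclusion is immediate. For the given $K$, $L$ and $d$, Corollary~\ref{C:mainco} produces a natural number $M$, whose size is controlled by the norm, and a connection $(t_0,j_0)\colon[M]\leftrightarrow[L]$ in $\mathcal F$ such that, for every $d$-coloring of the connections between $M$ and $K$, the image $(t_0,j_0)\cdot S=\{(t_0,j_0)\cdot(s,i)\colon(s,i)\colon[L]\leftrightarrow[K]\}$ is monochromatic. This is exactly the statement of Theorem~\ref{T:RGR}.
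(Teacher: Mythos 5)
Your high-level architecture coincides with the paper's: an actoid of sets over a normed background whose elements encode a rigid surjection together with a compatible increasing injection, verification of the localized pigeonhole (lph) via a Graham--Rothschild-type statement, and then Theorem~\ref{T:hp} with Corollary~\ref{C:mainco}/\ref{C:mainco2}. But the proposal stops exactly where the proof begins. You yourself flag the localized pigeonhole check as ``the main obstacle'' and ``the crux,'' and you do not carry it out; everything that makes the theorem true is deferred to ``I would check.'' Worse, the specific setup you sketch is the one that does \emph{not} work smoothly. You propose to truncate a connection by collapsing the largest value of the \emph{surjection} coordinate and discarding the corresponding marked point of the injection. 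With that choice, the equivalence class $\{(s,i)\colon \partial(s,i)=(s_0,i_0)\}$ consists of pairs (a tail of a rigid surjection, a choice of marked point in the new top fiber subject to visibility); this is not a set of rigid surjections, so the Graham--Rothschild theorem does not directly color it, and you would need a ``pointed'' pigeonhole you have not supplied.

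The paper's solution to precisely this difficulty is to abandon raw connections in favor of \emph{augmented surjections} $(s,p)\in{\rm AS}$ --- the increasing injection $i$ is replaced by an increasing surjection $p$ with $i(x)=\max p^{-1}(x)$, a dummy top level is added ($s^{-1}(K)=\{L\}$), and, crucially, the truncation \eqref{E:traug} is driven by the $p$-coordinate: $\partial(s,p)=(s\upharpoonright{\rm dom}(\partial_f p),\partial_f p)$. With this synchronization, fixing $\partial(s,p)=(s_0,p_0)$ leaves free exactly an extension of $s_0$ to a rigid surjection on one more level, which is exactly what the Hales--Jewett theorem (for the actoid $({\mathcal F}_3,{\mathcal S}_3)$) and Voigt's \emph{partial} rigid-surjection version of the Graham--Rothschild theorem (for $({\mathcal G}_3,{\mathcal T}_3)$) color. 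Note also two points your plan omits: since truncation produces partial objects, a second actoid of partial augmented surjections is indispensable even to prove the full-object statement, and the pigeonhole there requires the Voigt partial version, not the plain Graham--Rothschild theorem; and the passage back from augmented surjections to connections requires the explicit dictionary $(s,p)\mapsto(s\upharpoonright[L-1],i_p\upharpoonright[K-1])$ together with the check that it intertwines the two compositions. Until you specify a truncation for which the (lph) classes are literally the sets colored by a known theorem, the proof is not there.
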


Below we enclose a list of Ramsey-type theorems stated in Subsection~\ref{Su:self} formulated here
in the language of increasing injections and rigid surjections. These statements of the theorems
will be proved and used in this paper.

\medskip

\noindent {\bf Ramsey's Theorem.} {\em Given $d>0$ and natural numbers
$K$ and $L$, there exists a natural number $M$ such that for each
$d$-coloring of all increasing injections from $[K]$ to $[M]$ there
exists an increasing injection $j_0\colon [L]\to [M]$ such that
\[
\{ j_0\circ i\mid i\colon [K]\to [L]\hbox{ an increasing
injection}\}
\]
is monochromatic.}

\medskip

\noindent {\bf Dual Ramsey Theorem.} {\em Given $d>0$ and natural numbers $K$
and $L$, there exists a natural number $M$ such that for each
$d$-coloring of all rigid surjections from $[M]$ to $[K]$ there
exists a rigid surjection $t_0\colon [M]\to [L]$ such that
\[
\{ s\circ t_0\mid s\colon [L]\to [K]\hbox{ a rigid surjection}\}
\]
is monochromatic.}

\medskip

\noindent {\bf Hales--Jewett Theorem.} {\em Given $d>0$ and $0< A\leq L$,
there exists $M\geq A$ with the following property. For each
$d$-coloring of the set
\[
\{ v\colon [M]\to [A]\mid v\upharpoonright [A] = {\rm id}_{[A]}\}
\]
there exists a rigid surjection $s_0\colon [M]\to [L]$ such that
$s_0\upharpoonright [A] ={\rm id}_{[A]}$ and
\[
\{ v\circ s_0\mid v\colon [L]\to [A],\, v\upharpoonright [A] = {\rm id}_{[A]}\}
\]
is monochromatic.}

\medskip

\noindent {\bf Hales--Jewett Theorem, Voigt's version.} {\em Given $d>0$,
$0< A\leq L$,
there exists $M\geq A$ with the following property. For each
$d$-coloring of the set
\[
\{ v\colon [M']\to [A]\mid A\leq M'\leq M\hbox{ and }v\upharpoonright [A] = {\rm id}_{[A]}\}
\]
there exists a rigid surjection $s_0\colon [M']\to [L]$ for some $M'\leq M$ such that
$s_0\upharpoonright [A] ={\rm id}_{[A]}$ and
\[
\{ v\circ s_0\mid v\colon [L']\to [A],\, A\leq L'\leq L,\hbox{ and }v\upharpoonright [A] = {\rm id}_{[A]}\}
\]
is monochromatic.}

\medskip

\noindent {\bf Graham--Rothschild Theorem.} {\em Given $d>0$, $A\leq K$,and $A \leq L$, there
exists $M\geq A$ with the following property. For each
$d$-coloring of
\[
\{ s\colon [M]\to [K]\mid s\in {\rm RS}\hbox{ and }
s\upharpoonright [A] = {\rm id}_{[A]}\}
\]
there is a rigid surjection $t_0\colon [M]\to [L]$, with
$t_0\upharpoonright [A] ={\rm id}_{[A]}$, such that
\[
\{ s\circ t_0\mid s\colon [L]\to [K],\, s\in {\rm RS}\hbox{ and }
s\upharpoonright [A] = {\rm id}_{[A]}\}
\]
is monochromatic.}

\medskip
\noindent {\bf Graham--Rothschild Theorem, Voigt's version.} {\em Given
$d>0$, $A\leq K$, and $A\leq L$, there exists $M\geq A$ with the following
property. For each $d$-coloring of
\[
\{ s\colon [M']\to [K]\mid A\leq M'\leq M, \, s\in {\rm RS}, \hbox{ and }
s\upharpoonright [A] = {\rm id}_{[A]}\}
\]
there exist $M_0'$ and a rigid surjection $t_0\colon [M_0']\to [L]$,
with $A\leq M_0'\leq M$ and $t_0\upharpoonright [A] ={\rm
id}_{[A]}$, such that
\[
\{ s\circ t_0\mid s\colon [L']\to [K],\, A\leq L'\leq L ,\, s\in
{\rm RS}, \hbox{ and } s\upharpoonright [A] = {\rm id}_{[A]}\}
\]
is monochromatic.}

\subsection{Translation of rigid surjections into parameter sets}\label{Su:transl}

We show here how to translate statements involving parameter sets
(sometimes called combinatorial cubes) into statements about rigid
surjections. This latter language was proposed by Pr\"{o}mel and
Voigt \cite{PV1}. It has been used in papers \cite{So} and
\cite{So2} in the context of structural Ramsey theory.

With each $A$-parameter set of dimension $l$ on $n$ as in
\eqref{E:par}, we associate a rigid surjection $s_V\colon [A+n]\to
[A+l]$ as follows. We enumerate the sets in $\mathcal G$ as $Y_1,
\dots, Y_l$ so that $\min Y_i<\min Y_j$ if $i<j$. We let
\[
s_V\upharpoonright [A] = {\rm id}_{[A]}
\]
and
\begin{equation}\notag
s_V(A+x) =
\begin{cases}
A+i, &\text{if $x\in Y_i$;}\\
g(x), &\text{if $x\in [n]\setminus\bigcup_{i=1}^l Y_i$.}
\end{cases}
\end{equation}
This association is a bijection between all $A$-parameter sets of
dimension $l$ on $n$ and all rigid surjections $s\colon [A+n]\to
[A+l]$ with the property $s\upharpoonright [A] = {\rm id}_{[A]}$.
Moreover, it is not difficult to check, and we leave it to the
reader, that an $A$-parameter set $U$ of dimension $k$ on $n$ is a
subobject of an $A$-parameter set $V$ of dimension $l$ on $n$ if and
only if there is a rigid surjection $r\colon [A+l]\to [A+k]$ with
$r\upharpoonright [A] = {\rm id}_{[A]}$ and such that
\begin{equation}\label{E:compr}
s_U = r\circ s_V,
\end{equation}
and for each rigid surjection $r\colon [A+l]\to [A+k]$ with
$r\upharpoonright [A] = {\rm id}_{[A]}$ there is an $A$-parameter
set $U$ of dimension $k$ on $n$ that is a subobject of $V$ such that
\eqref{E:compr} holds.

These remarks give translations between the statements of the
Hales--Jewett theorem, the Graham--Rothschild theorem, their Voigt's versions,
and the self-dual Ramsey theorem phrased in terms of
parameter sets as in Subsection~\ref{Su:self} and the statements of these results phrased
in terms of rigid surjections as in Subsection~\ref{Su:refor}.

\section{Algebraic structures}\label{S:alg}

We introduce here the main algebraic notions needed in the abstract approach to Ramsey theory. We illustrate the new
notions with two series of examples, one related to the classical Ramsey
theorem, the other one to the Hales--Jewett theorem.

\subsection{Actoids}

The notion of actoid defined below is the most rudimentary
version of an action much like a semigroup action on a set.

\begin{definition}
By an {\em actoid} we understand two sets $A$ and $Z$, a
partial binary function from $A\times A$ to $A$:
\[
(a, b)\to a\cdot b,
\]
and a partial binary function from $A\times Z$ to $Z$:
\[
(a,z)\to a\dowd z
\]
such that for $a,b\in A$ and $z\in Z$ if $a\dowd (b\dowd z)$ and
$(a\cdot b)\dowd z$ are both defined, then
\begin{equation}\label{E:acct}
a\dowd (b\dowd z) = (a\cdot b)\dowd z.
\end{equation}
\end{definition}
The binary operation $\cdot$ on an actoid as above will be
called {\em multiplication} and the binary operation $\dowd$ will be
called {\em action}. Unless otherwise stated, the multiplication
will be denoted by $a\cdot b$ and the action by $a\dowd z$.
Note that in the case when $A=Z$ and the multiplication
coincides with the action, an actoid becomes what sometimes is called a partial semigroup.
With some abuse of notation, we denote an actoid as in the
definition above by $(A,Z)$.

To gain some intuitions about actoids, one may think of both
$A$ and $Z$ as sets of functions with multiplication $a\cdot b$ on
$A$ corresponding to composition $a\circ b$ that is defined only
when the range of $b$ is included in the domain of $a$. Similarly,
the action of $A$ on $Z$, $a\dowd z$, corresponds to composition
$a\circ z$ that is defined when the range of $z$ is included in the
domain of $a$.

We will have two sequences of examples illustrating the main notions
that are being introduced: sequence A leads to the classical Ramsey
theorem, sequence B leads to the Hales--Jewett theorem.

\medskip
\noindent {\bf Example A1.} Recall the definition of increasing injections and their class $\rm II$ from Subsection~\ref{Su:injsur}.
We let $B=Y={\rm II}$ and we make $(B,Y)$ into a composition space as
follows. For $i, j\in {\rm II}$, $j\cdot i$ and $j\dowd i$ are
defined if ${\rm range}(i)\subseteq {\rm domain}(j)$ and then
\[
j\cdot i = j\dowd i =  j\circ i.
\]

\medskip

\noindent {\bf Example B1.}
Fix $K_0\in {\mathbb N}$. Recall the set of increasing surjections ${\rm IS}$ from Subsection~\ref{Su:injsur}. Let
\[
X_{K_0} = \{ f\mid \exists L\in {\mathbb N}\; (f\colon [L]\to \{
0\}\cup [K_0])\}
\]
For $p\colon [N]\to [M]$, $p\in {\rm IS}$ and $f\colon [L]\to \{
0\}\cup [K_0]$, $f\in X_{K_0}$, declare $p\dowd f$ to be defined
precisely when the canonical composition $f\circ p$ is defined, and let
\[
p\dowd f = f\circ p.
\]
It is easy to see that $({\rm IS}, X_{K_0})$ is an actoid.

\subsection{Set actoids}

We will need to lift the operations on a given actoid to families of sets.
This is done by restricting the obvious pointwise liftings and is the content of
the following definition of {\em set actoid}. There is no
harm, as far as applications in this paper go, in thinking about $\mathcal F$ and
$\mathcal S$ in the definition below as consisting of finite
non-empty sets.

\begin{definition}\label{D:fulac}
Let $(A, Z)$ be an actoid. Let $\mathcal F$ be a family of
subsets of $A$ and $\mathcal S$ a family of subsets of $Z$. Let
\[
(F,G)\to F\bullet G
\]
be a partial function from ${\mathcal F}\times {\mathcal F}$ to
$\mathcal F$ and let
\[
(F,S)\to F\dbullet S
\]
be a partial function from ${\mathcal F}\times {\mathcal S}$ to
$\mathcal S$. We say that $(\mathcal F, \mathcal S)$ with these two
operations is an {\em set actoid over} $(A, Z)$ provided that
whenever $F\bullet G$ is defined, then $f\cdot g$ is defined for all $f\in F$ and $g\in G$ and
\[
F\bullet G = \{ f\cdot g \mid f\in F,\, g\in G\},
\]
and whenever $F\dbullet S$ is defined, then $f\dowd x$ is defined for all $f\in F$ and $x\in S$ and
\[
F\dbullet S = \{ f\dowd x\mid f\in F,\, x\in S\}.
\]
\end{definition}
Because of \eqref{E:acct}, for $F, G, S$ such that both
$F\dbullet (G\dbullet S)$ and $(F\bullet G)\dbullet S$ are defined, one has
$F\dbullet (G\dbullet S)= (F\bullet G)\dbullet S$.

We adopt the following conventions for an actoid $(A,Z)$. For sets $F,G\subseteq A$,
we say that $F\cdot G$ is defined if $f\cdot g$ is defined for all $f\in F$ and $g\in G$, and we let
\begin{equation}\label{E:setmul}
F\cdot G= \{ f\cdot g\mid f\in F,\, g\in G\}.
\end{equation}
Similarly, for $F\subseteq A$ and $S\subseteq Z$, we say that $F\dowd S$ is defined, if $f\dowd x$ is defined for all $f\in F$ and $x\in S$, and we let
\begin{equation}\label{E:seteval}
F\dowd S = \{f\dowd x\mid f\in F, x\in S\}.
\end{equation}
If $F = \{ f\}$ for some $f\in A$, we write
\[
f\dowd S
\]
for $\{ f\}\dowd S$, if it is defined.
A set actoid as in Definition~\ref{D:fulac} is a restriction of this natural pointwise operations on sets.

We record the following easy lemma.
\begin{lemma}\label{L:asset}
Let $(A, Z)$ be an actoid. For $F,G\subseteq A$ and $S\subseteq
Z$ if $(F\cdot G)\dowd S$ and $F\dowd (G\dowd S)$ are both defined,
then they are equal and, moreover, for $f\in F, g\in G, x\in S$
\[
(f\cdot g)\dowd x = f\dowd (g\dowd x).
\]
\end{lemma}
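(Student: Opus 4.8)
The plan is to reduce the set-level identity to the pointwise identity guaranteed by the local actoid axiom \eqref{E:acct}, and to obtain the set equality by observing that both sides are images of the index set $F\times G\times S$ under maps that agree elementwise. First I would unwind the two definedness hypotheses. Since $(F\cdot G)\dowd S$ is defined, the product $F\cdot G$ is defined (so $f\cdot g$ is defined for all $f\in F$, $g\in G$), and, by the definition \eqref{E:seteval} of the action on sets, $h\dowd x$ is defined for every $h\in F\cdot G$ and $x\in S$; as each such $h$ equals $f\cdot g$ for some $f\in F$, $g\in G$, this says precisely that $(f\cdot g)\dowd x$ is defined for all $f\in F$, $g\in G$, $x\in S$. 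Symmetrically, since $F\dowd(G\dowd S)$ is defined, first $G\dowd S$ is defined (so $g\dowd x$ is defined for all $g\in G$, $x\in S$), and then $f\dowd w$ is defined for all $f\in F$ and $w\in G\dowd S$; writing $w=g\dowd x$, this gives that $f\dowd(g\dowd x)$ is defined for all $f\in F$, $g\in G$, $x\in S$.

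With both $(f\cdot g)\dowd x$ and $f\dowd(g\dowd x)$ defined for every triple $(f,g,x)\in F\times G\times S$, I would apply the defining axiom \eqref{E:acct} of a local actoid with $a=f$, $b=g$, $z=x$ to conclude
\[
(f\cdot g)\dowd x = f\dowd(g\dowd x)
\]
for all such triples. This is already the ``moreover'' part of the statement.

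Finally, for the set equality I would express both sides as the image of $F\times G\times S$. Using \eqref{E:setmul} and \eqref{E:seteval},
\[
(F\cdot G)\dowd S = \{\, (f\cdot g)\dowd x : f\in F,\, g\in G,\, x\in S\,\},
\]
and using \eqref{E:seteval} twice,
\[
F\dowd(G\dowd S) = \{\, f\dowd(g\dowd x) : f\in F,\, g\in G,\, x\in S\,\}.
\]
The pointwise identity then makes these two sets equal term by term, which is the desired conclusion.

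I expect no serious obstacle here: the content lies entirely in correctly reading the definition of ``defined'' for the set operations \eqref{E:setmul} and \eqref{E:seteval}. The one point deserving care is that an element $h$ of $F\cdot G$ may have several representations as $f\cdot g$; but since definedness of $(F\cdot G)\dowd S$ is demanded for \emph{every} element of the set $F\cdot G$, it holds for each representative, so passing between the ``element of the set'' formulation and the ``for all $f,g$'' formulation is harmless. The argument invokes the local actoid axiom exactly once per triple and uses nothing else.
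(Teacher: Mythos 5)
Your proof is correct, and it is exactly the argument the paper has in mind when it records this as an easy lemma without proof: unwind the definedness conditions for the set operations \eqref{E:setmul} and \eqref{E:seteval} to get pointwise definedness on every triple in $F\times G\times S$, apply the local actoid axiom \eqref{E:acct} to each triple, and read off the set equality since both sides are the images of $F\times G\times S$ under pointwise-agreeing maps. Your remark about multiple representations of an element of $F\cdot G$ is the right point to flag, and you resolve it correctly.
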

The lemma above says, in particular, that the pair consisting of the
family of all subsets of $A$ and the family of all subsets of $Z$
with the operations defined by \eqref{E:setmul} and
\eqref{E:seteval} is an actoid in its own right.

\medskip

\noindent {\bf Example A2.}
We continue with Example~A1. Recall the definition of $\binom{L}{K}$ from Subsection~\ref{Su:injsur}.
We let ${\mathcal F} = {\mathcal S}$ consist of all subsets of $\rm
II$ of the form $\binom{L}{K}$ with $1\leq K\leq L$ or $K=L=0$.
Note that $\binom{0}{0}$ contains only one element---the empty function.
For $\binom{L}{K}, \binom{N}{M}\in {\mathcal F} = {\mathcal S}$, we
let $\binom{N}{M}\bullet\binom{L}{K}$ and $\binom{N}{M}\dbullet
\binom{L}{K}$ be defined if and only if $L=M$, and then we let
\[
\binom{N}{L}\bullet \binom{L}{K} = \binom{N}{L}\dbullet \binom{L}{K}
= \binom{N}{K}.
\]
One easily checks that $({\mathcal F}, {\mathcal S})$ with $\bullet$
and $\dbullet$ is a set actoid over $(B,Y)$. Note that
\[
\binom{N}{M}\cdot \binom{L}{K} = \binom{N}{M}\dowd \binom{L}{K}
\]
are defined under a weaker assumption that $M\geq L$ and in that
case they are both equal to $\binom{N-(M-L)}{K}$.

\medskip

\noindent {\bf Example B2.}
We continue with Example~B1;
in particular, $K_0\in {\mathbb N}$ remains fixed. We assume
$K_0\geq 1$. For $K\geq 2$, let $S_K\subseteq X_{K_0}$ consist of
all $h\colon [K]\to \{ 0\}\cup [K_0]$ such that for some $1\leq
a\leq b<K$ and $0\leq c\leq K_0$, we have
\begin{equation}\label{E:neun}
h(x) =
\begin{cases}
K_0,&\text{if $x\leq a$;}\\
c, &\text{if $a+1\leq x\leq b$;}\\
\max(1, K_0- 1), &\text{if $b+1\leq x$.}
\end{cases}
\end{equation}
Formula \eqref{E:neun} always gives $h(1)=K_0$ and $h(K)=\max(1, K_0- 1)$.
Let $S_K'\subseteq X_{K_0}$ consist of all $h\colon [K]\to \{0\}\cup [K_0]$ such that for some $1\leq
a\leq b<K$ and $0\leq c\leq \max(1, K_0- 1)$
\begin{equation}\label{E:neun2}
h(x) =
\begin{cases}
c, &\text{if $a+1\leq x\leq b$;}\\
\max(1, K_0- 1), &\text{if $x\leq a$ or $b+1\leq x$}.
\end{cases}
\end{equation}
Formula \eqref{E:neun2} implies that the image of $h$ is included in $\{ 0\}\cup [\max (1, K_0-1)]$.
Let
\[
{\mathcal S}_{K_0}= \{ S_K\mid K\geq 2\} \cup \{ S_K'\mid K\geq 2\}.
\]

For $0< K\leq L$, let
\[
F_{L,K} = \{ p\in {\rm IS}\mid p\colon [L]\to [K]\}.
\]
Let
\[
{\mathcal F}_0 = \{ F_{L,K}\mid 0< K\leq L\}.
\]

We let $F_{N,M}\bullet F_{L,K}$ be defined if $L=M$ and then we let
\[
F_{N,L}\bullet F_{L,K} = F_{N,K}.
\]
We let $F_{M,L}\dbullet S_K$ and $F_{M,L}\dbullet S_K'$ be defined precisely when $K=L$ and, in
this case, we let
\[
F_{M,K}\dbullet S_K = S_M\;\hbox{ and }\; F_{M,K}\dbullet S_K'= S_M'.
\]
It is now easy to check that $({\mathcal F}_0, {\mathcal S}_{K_0})$
with the operations defined above is a set actoid over
$({\rm IS}, X_{K_0})$.

\subsection{Composition spaces}

To formulate the pigeonhole principle, we need additional structure
on actoids.

\begin{definition}
A {\em composition space} is an actoid $(A, Z)$ together with a
function $\partial\colon Z\to Z$ such that for $a\in A$ and $z\in
Z$, if $a\dowd z$ is defined, then $a\dowd \partial z$ is defined
and
\begin{equation}\label{E:acrestr}
a\dowd \partial z =\partial (a\dowd z).
\end{equation}
\end{definition}

This additional function on $Z$ will be called {\em truncation} and
it will always be denoted by $\partial$ possibly with various
subscripts and superscripts.

Condition~\eqref{E:acrestr} in the above definition states that in a
composition space $(A,Z)$ the action of $A$ on $Z$ is done by partial
homomorphisms of the structure $(Z, \partial)$. If we continue to
think of an actoid $(A,Z)$ as a family of functions $A$ acting
by composition on a family of functions $Z$, then we can view
truncation as a ``restriction operator" on functions from $Z$. So,
condition~\eqref{E:acrestr} can be translated to say that if the
composition of $a$ and $z$ is defined, then so is the composition of
$a$ and the restriction $\partial z$ of $z$ and its result is a
restriction of the composition of $a$ and $z$, which we require to
be given by the operator $\partial$. Truncation can also be thought
as producing out of an object $z$ a simpler object $\partial z$ of
the same kind. In proofs, this point of view leads to inductive
arguments.

We write
\[
\partial^t z
\]
for the element obtained from $z$ after $t\in {\mathbb N}$
applications of $\partial$. For a subsets $S\subseteq Z$, we write
\begin{equation}\label{E:setprim}
\partial S = \{ \partial z \mid z\in S\}.
\end{equation}
Again, for $t\in {\mathbb N}$, we write
\[
\partial^t S
\]
for the result of applying the operation $\partial$ to $S$ $t$
times.

We record the following obvious lemma.

\begin{lemma}\label{L:aba}
Let $(A, Z)$ be a composition space. Then for $F\subseteq A$ and
$S\subseteq Z$, if $F\dowd S$ is defined, then $F\dowd \partial S$
is defined and
\[
\partial (F\dowd S) = F\dowd (\partial S).
\]
\end{lemma}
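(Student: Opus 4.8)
The plan is to unwind the two relevant definitions---the action on sets from \eqref{E:seteval} and the truncation on sets from \eqref{E:setprim}---and to reduce the whole statement to the defining property \eqref{E:acrestr} of a background, applied one element at a time. Since both the definedness and the equality are statements about the sets $F\dowd S$, $F\dowd(\partial S)$, and $\partial(F\dowd S)$, the natural strategy is first to secure definedness and only then to manipulate the sets as sets.

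First I would verify that $F\dowd(\partial S)$ is defined. By hypothesis $F\dowd S$ is defined, which by \eqref{E:seteval} means precisely that $f\dowd x$ is defined for every $f\in F$ and every $x\in S$. For each such pair, condition \eqref{E:acrestr} guarantees that $f\dowd\partial x$ is also defined. Since by \eqref{E:setprim} every element of $\partial S$ is of the form $\partial x$ for some $x\in S$, it follows that $f\dowd y$ is defined for all $f\in F$ and all $y\in\partial S$; hence $F\dowd(\partial S)$ is defined.

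The asserted equality is then a direct computation. Using \eqref{E:seteval} and \eqref{E:setprim},
\[
F\dowd(\partial S)=\{\,f\dowd\partial x\colon f\in F,\ x\in S\,\}=\{\,\partial(f\dowd x)\colon f\in F,\ x\in S\,\},
\]
where the second equality is exactly \eqref{E:acrestr} applied termwise. On the other hand, \eqref{E:setprim} followed by \eqref{E:seteval} gives
\[
\partial(F\dowd S)=\{\,\partial z\colon z\in F\dowd S\,\}=\{\,\partial(f\dowd x)\colon f\in F,\ x\in S\,\}.
\]
The two right-hand sides coincide, which establishes $\partial(F\dowd S)=F\dowd(\partial S)$.

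There is no genuine obstacle here: the lemma is, as the text advertises, obvious, and it uses no hypothesis beyond \eqref{E:acrestr}. The only points requiring a little care are bookkeeping ones---settling definedness before manipulating the sets, and quantifying correctly over all $f\in F$ and $x\in S$ rather than over single elements---together with the harmless observation that distinct $x\in S$ may yield the same $\partial x$, which is irrelevant since we are comparing the two collections as sets.
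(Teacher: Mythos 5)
Your proof is correct and is the obvious elementwise unwinding of \eqref{E:seteval}, \eqref{E:setprim}, and \eqref{E:acrestr}; the paper omits the argument entirely, labeling the lemma obvious, and what you wrote is exactly the intended verification.
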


It follows from the above lemma that if $(A,Z)$ is a composition space,
then the pair consisting of the family of all subsets of $A$ and the
family of all subsets of $Z$ becomes a composition space with the
operations defined by \eqref{E:setmul}, \eqref{E:seteval}, and
\eqref{E:setprim}.

\medskip

\noindent {\bf Example A3.}
We continue with Examples~A1 and A2.
For $i\colon [K]\to {\mathbb N}$ in $Y={\rm II}$, let
\[
\partial i = i\upharpoonright [\max(0, K- 1)].
\]
It is easy to check that $(B,Y)$ with $\partial$ is a composition space.

\medskip

\noindent {\bf Example B3.}
We continue with Examples~B1 and B2.
Fix $K_0\geq 1$. On $X_{K_0}$, we define the following truncation. For
$f\in X_{K_0}$, $f\colon [L]\to \{ 0\}\cup [K_0]$, let
\begin{equation}\notag
(\partial f)(x) =
\begin{cases}
\max(1, K_0- 1),&\text{ if $f(x)=K_0$;}\\
f(x),&\text{ if $f(x)\leq K_0-1$.}
\end{cases}
\end{equation}
It is now easy to see that for $p\in {\rm IS}$, if $p\dowd f$ is
defined, then
\[
\partial(p\dowd f) = \partial(f\circ p) = (\partial f)\circ p =
p\dowd (\partial f),
\]
and, therefore, $({\rm IS}, X_{K_0})$ with $\partial$ defined above
is a composition space.

\subsection{Ramsey domains}

The additional structure on composition spaces will allow us to formulate conditions on
set actoids that are needed for our Ramsey result. A set actoid fulfilling these
conditions will be called a Ramsey domain. But, first we need to introduce the following notion.
\begin{definition}
Let $(A, Z)$ be an actoid. For $a,b\in A$, we say that {\em $b$
extends $a$} if for each $x\in Z$ for which $a\dowd x$ is defined,
we have that $b\dowd x$ is defined and $a\dowd x = b\dowd x$.
\end{definition}

Now we introduce the main notion of this subsection.

\begin{definition}\label{D:ramdom} A set actoid $({\mathcal F},\, {\mathcal S})$ over a composition space is called a {\em Ramsey domain}
if each set in $\mathcal S$ is non-empty and the following conditions hold for all $F,G\in {\mathcal F}$ and $S\in {\mathcal S}$,
\begin{enumerate}
\item[(i)] if $F\dbullet (G\dbullet S)$ is defined, then so is $(F\bullet G)\dbullet S$;

\item[(ii)] $\partial S\in {\mathcal S}$;

\item[(iii)] if $F\dbullet\partial S$ is defined, then there is $H\in {\mathcal F}$ such that $H\dbullet S$ is defined and for each $f\in F$
there is $h\in H$ extending $f$.
\end{enumerate}
\end{definition}

Condition (i) is crucial in the proof of the abstract Ramsey theorem. We introduced the restrictions $\bullet$ and $\dbullet$ of pointwise
multiplication and pointwise action as in \eqref{E:setmul} and \eqref{E:seteval} to make sure that condition~(i) is fulfilled in concrete examples.
It says that $F\dbullet (G\dbullet S)$ is not defined by
chance; if it is defined, then the product $F\bullet G$ must be defined and it acts on $S$. Note that, by Lemma~\ref{L:aba}, this condition implies that
\[
F\dbullet (G\dbullet S)=(F\bullet G)\dbullet S.
\]
Condition (ii) is just a closure property.
As for condition (iii), in many situations, it would be useful to be able to deduce from $f\dowd \partial x$ being defined
that $f\dowd \partial x = \partial(f\dowd x)$. Such an implication fails as $f\dowd x$ may be undefined. To counter this failure,
condition (iii) ensures that if $f\dowd x$ is defined, for $f\in F$ and $x\in S$, then there is $h\in H$ such that
$h\dowd x$ is defined and
\[
f\dowd \partial x = \partial (h\dowd x),
\]
and that this happens for all $x\in S$ with the same $h$. We also point out that a careful reading of the proofs below shows that one can weaken 
the conclusion of condition (iii) to ``$H\dbullet S$ is defined and for each $f\in F$ there is $h\in H$ such that $h$ extends $f$ on $\partial S$, that is, for each 
$x\in \partial S$, $h\dowd x$ is defined and $h\dowd x = f\dowd x$." However, this refinement has not turned out to be useful so far.

\medskip

\noindent {\bf Example A4.}
We continue with Examples~A1--A3. We check that the set actoid defined in Example~A2 is a Ramsey domain.
Point (i) of the definition is clear since
\[
\binom{Q}{P}\dbullet (\binom{N}{M}\dbullet \binom{L}{K})
\]
is defined precisely when $M=L$ and $P=N$, in which case
\[
(\binom{Q}{P}\bullet \binom{N}{M})\dbullet \binom{L}{K}
\]
is defined as well. Let $\binom{L}{K}$ from the set actoid be given. Then $L\geq K\geq 1$ or $L=K=0$. If $K\geq 2$, then
$\partial \binom{L}{K} = \binom{L-1}{K-1}$; if $K=1$ or $K=0$, then
$\partial \binom{L}{K}=\binom{0}{0}$. Point (ii) of the definition follows immediately. To see point (iii),
assume that $\binom{N}{M}\dbullet \partial \binom{L}{K}$ is defined. If $K\geq 2$, then $M=L-1$ and $\binom{N+1}{L}$ witnesses that
point (iii) holds. If $K=1$, then $N=M=0$ and $\binom{L}{L}$ witnesses that (iii) holds. If $L=K=0$, then again
$N=M=0$ and $\binom{0}{0}$ witnesses (iii).

\medskip

\noindent {\bf Example B4.}
We continue with Examples~B1--B3. We check that the set actoid defined in Example~B2 is a Ramsey domain. Point (i) of the definition of Ramsey domain
is immediate from
the definition of $\bullet$ and $\dbullet$. Point (ii) is clear since $\partial S_K=S_K'$ and $\partial S_K'=S'_K$. To see point (iii), note that if
$F_{M,L}\dbullet \partial S_K$ or $F_{M,L}\dbullet \partial S'_K$ is defined, then $L=K$ and $F_{M,L}$ itself witnesses that point (iii) holds.

\begin{lemma}\label{L:voass}
Let $({\mathcal F},{\mathcal S})$ be a Ramsey domain. Let $S\in
{\mathcal S}$ and $F_1, \dots, F_n\in {\mathcal F}$. Assume that
\[
z_1 = F_n\dbullet (F_{n-1}\dbullet\cdots (F_2 \dbullet (F_1\dbullet
S)))
\]
is defined. Then
\[
z_2 = (F_n\bullet (F_{n-1}\cdots (F_2\bullet F_1)))\dbullet
S\;\hbox{ and }\; z_3= (((F_n\bullet F_{n-1})\cdots F_2)\bullet
F_1)\dbullet S
\]
are defined and $z_1=z_2=z_3$.
\end{lemma}

\begin{proof} One proves the existence of $z_2$ and $z_1=z_2$ and
the existence of $z_3$ and $z_1=z_3$ by separate inductions. To run
the inductive argument for $z_1=z_2$, note that by \eqref{E:acct}
and point (i) of Definition~\ref{D:ramdom}
\[
F_n\dbullet (F_{n-1}\dbullet\cdots (F_2\dbullet (F_1\dbullet S))) =
F_n\dbullet (F_{n-1}\dbullet\cdots (F_3\dbullet ((F_2\bullet
F_1)\dbullet S)))
\]
and apply the inductive assumption. Similarly, to run the induction
for $z_1=z_3$, note that by \eqref{E:acct} and point (i) of Definition~\ref{D:ramdom}
\[
F_n\dbullet (F_{n-1}\dbullet\cdots (F_2\dbullet (F_1\dbullet S)))
=(F_n\bullet F_{n-1})\dbullet (F_{n-2}\dbullet\cdots (F_2\dbullet
(F_1\dbullet S))),
\]
and apply the inductive assumption.
\end{proof}

\section{Ramsey and pigeonhole conditions and the first abstract Ramsey theorem}\label{S:RP}

\subsection{Ramsey condition (R) for set actoids}

At this point, we can state the abstract Ramsey property alluded to in the introduction. Note that
it can be stated for set actoids (truncation is not needed). So let $({\mathcal F}, {\mathcal S})$ be a set actoid.

\medskip
\noindent {\em {\bf Condition (R).} For each $d>0$ and each $S\in {\mathcal S}$ there exists
$F\in {\mathcal F}$ such that $F\dbullet S$ is defined, and for each
$d$-coloring of $F\dbullet S$ there exists $f\in F$ with $f\dowd S$
monochromatic.}
\medskip

Condition (R) from the definition above, when
interpreted for the set actoid from Example~A2 becomes just the
classical Ramsey theorem.

Condition (R) is a very abstract property. Essentially all finite Ramsey theorems, even those which we cannot prove
using the methods of this paper, like for example structural Ramsey theorems, can be seen as particular instances of (R).
(One should point out that possibilities of phrasing certain Ramsey statements using actions of {\em groups} have been explored
in \cite[Section 4]{KePeTo05}, \cite[Section 1.5]{Pe06}, and, most recently, in \cite[Section 4]{Bl11}.)
Of course, at this point, the main
problem is: are there general settings in which (R) can actually be proved?
The structure of set actoids over actoids is not rich enough to support such proofs. For this we will need
the structure of Ramsey domains over composition spaces and normed composition spaces. We will also need to formulate pigeonhole principles from
which condition (R) can be deduced and which can be formulated only using these richer structures. We proceed to the formulation of such
a pigeonhole principle right now.

\subsection{Pigeonhole principle for Ramsey domains}

We formulate two pigeonhole principles:
one here called (P) and a localized version of it called (LP) in
Section~\ref{S:loc}. They are not straightforward abstractions of
the classical Dirichlet's pigeonhole principle. Rather they are
conditions that make it possible to carry out inductive arguments
proving the Ramsey property, they are easy to verify in concrete
situations and are flexible enough to accommodate in applications many concrete statements as
special cases. For example, the abstract pigeonhole principle (LP) reduces to Dirichlet's pigeonhole principle
in the case of the classical Ramsey theorem; however, in different situations a variety of
other statements, like the Hales--Jewett theorem or the Graham--Rothschild
theorem serve as pigeonhole principles.

The pigeonhole principle (P) below can be thought of in the
following way. The Ramsey condition requires, upon coloring of
$F\dowd S$, fixing of a color on $f\dowd S$ for some $f\in F$. In
condition (P), we consider the equivalence relation on $S$ that
identifies $x_1$ and $x_2$ from $S$ if $\partial x_1 =
\partial x_2$. The pigeonhole principle (P) requires fixing of
a color on each equivalence class separately, rather than on the
whole $S$, after acting by an element of $F$.

Let $({\mathcal F}, {\mathcal S})$ be a Ramsey domain.

\medskip

\noindent {\em {\bf Condition (P).} For every $d>0$ and $S\in {\mathcal S}$
there exists $F\in {\mathcal F}$ such that $F\dbullet S$ is defined
and for each $d$-coloring $c$ of $F\dbullet S$ there
exists $f\in F$ such that for all $x_1, x_2\in S$ we have
\[
\partial x_1 = \partial x_2\Longrightarrow c(f\dowd x_1)=c(f\dowd x_2).
\]}

\medskip

It is convenient to illustrate the above definition by sequence B of
examples. The localized pigeonhole principle from
Section~\ref{S:loc} will be illustrated by sequence A.

\medskip

\noindent {\bf Example B5.}
Before we continue with Examples~B1--B4, we state Dirichlet's pigeonhole principle phrased here in
a surjective form.
\begin{enumerate}
\item [($*$)]For every $d>0$ and $K\geq 2$ there exists $L\geq 2$
such that for each $d$-coloring of all $q\colon [L]\to [2]$, $q\in
{\rm IS}$, there exists $q_0\colon [L]\to [K]$, $q_0\in {\rm IS}$,
such that
\[
\{ p\circ q_0\mid p\colon [K]\to [2],\, p\in {\rm IS}\}
\]
is monochromatic.
\end{enumerate}
One can take
\begin{equation}\notag
L= d(K-2)+2.
\end{equation}

We claim that the Ramsey domain $({\mathcal F}_0, {\mathcal
S}_{K_0})$ defined in Example~B2 and checked to be a Ramsey domain in Example~B4 fulfills (P). Let $S_K, S_K'\in {\mathcal S}_{K_0}$. Note
that for $h_1,h_2\in S_K'$, $\partial h_1=
\partial h_2$ implies $h_1=h_2$. Therefore, we only need to check
condition (P) for $S_K$. Note that if $h\in S_{K}$, then $\partial
h$ uniquely determines $h$ among functions in $S_K$ unless $h$ is of
the following form: for some $0< K_1<K$,
\begin{equation}\label{E:above}
h\upharpoonright [K_1] \equiv K_0\;\hbox{ and }\; h\upharpoonright
([K]\setminus [K_1]) \equiv \max(1, K_0- 1).
\end{equation}
It follows that given $d>0$, we need to find $L\geq K$ so that for
each $d$-coloring $c$ of $F_{L,K}\dbullet S_K$ there is $p\in
F_{L,K}$ such that the color $c(h\circ p)$ is constant for $h\in
S_K$ of the form \eqref{E:above} as $K_1$ runs over $[K-1]$. Such an
$L$ exists by the virtue of the basic pigeonhole principle ($*$)
stated above.

\subsection{Pigeonhole implies Ramsey}\label{S:main}

We continue to adhere to the following convention: the three
operations on a composition space $(A, Z)$ are denoted by $\cdot$, $\dowd$,
and $\partial$, respectively, while the operations on a Ramsey domain
over $(A, Z)$ are denoted by $\bullet$ and $\dbullet$. We
also use the notation set up in \eqref{E:setmul}, \eqref{E:seteval},
and \eqref{E:setprim}.

Theorem~\ref{T:main2} and Corollary~\ref{C:mainco} give general
Ramsey statements derived from the pigeonhole principle (P).
Corollary~\ref{C:mainco} is simpler to state than
Theorem~\ref{T:main2} and is all that is needed from this theorem in
most, but not all, situations.

\begin{theorem}\label{T:main2}
Let $({\mathcal F}, {\mathcal S})$ be a Ramsey domain fulfilling condition (P).
For $d>0$, $t\geq 0$, and $S\in {\mathcal S}$,
there exists $F\in {\mathcal F}$ such that $F\dbullet S$ is defined
and for each $d$-coloring $c$ of $F\dbullet S$ there exists $f \in
F$ such that for $x_1, x_2\in S$
\begin{equation}\label{E:imp2}
\partial^t x_1 = \partial^t x_2\Longrightarrow c(f\dowd x_1) = c(f\dowd x_2).
\end{equation}
\end{theorem}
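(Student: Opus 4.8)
The plan is to argue by induction on the number of truncations $t$, holding the number of colors $d$ fixed throughout; each inductive step will consume exactly one application of the pigeonhole principle (ph) and one application of the inductive hypothesis, the two being glued together by the multiplication $\bullet$ of the actoid of sets. For the base case $t=0$ the premise $\partial^0 x_1=\partial^0 x_2$ reads $x_1=x_2$, so the implication \eqref{E:imp2} is trivially satisfied by every $f$; hence it suffices to produce any $F\in{\mathcal F}$ with $F\dbullet S$ defined, which (ph) (say with $t=0$) already supplies.

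For the inductive step, assume the statement for $t$ and fix $d>0$ and $S\in{\mathcal S}$. First I would apply (ph) with colors $d$, parameter $t$, and the set $S$ to get $F_1\in{\mathcal F}$ with $F_1\dbullet S$ defined and with the one-level collapsing property on $\partial^t S$. I then set $S_1=F_1\dbullet S\in{\mathcal S}$ and apply the inductive hypothesis to $S_1$ (with $d$ colors and parameter $t$) to obtain $F_2\in{\mathcal F}$ with $F_2\dbullet S_1$ defined and such that every $d$-coloring of $F_2\dbullet S_1$ is rendered $\partial^t$-invariant by some $f_2\in F_2$. Finally I put $F=F_2\bullet F_1$; by \eqref{E:locglo} together with Lemma~\ref{L:voass} (for $n=2$) this is defined, $F\dbullet S=F_2\dbullet(F_1\dbullet S)$, and by Lemma~\ref{L:asset} one has $(f_2\cdot f_1)\dowd x=f_2\dowd(f_1\dowd x)$ for $f_2\in F_2$, $f_1\in F_1$, $x\in S$.

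Now, given a $d$-coloring $c$ of $F\dbullet S=F_2\dbullet S_1$, the inductive hypothesis yields $f_2\in F_2$ with $c(f_2\dowd v_1)=c(f_2\dowd v_2)$ whenever $v_1,v_2\in S_1$ and $\partial^t v_1=\partial^t v_2$. This invariance is exactly what I need to push the coloring down one level: it makes well defined an auxiliary $d$-coloring $c'$ of $F_1\dowd(\partial^t S)=\partial^t S_1$ (the two sets being equal by Lemma~\ref{L:aba}), given by $c'(\partial^t v)=c(f_2\dowd v)$ for $v\in S_1$. Applying the one-level collapsing property of $F_1$ to $c'$ produces $f_1\in F_1$ with $c'(f_1\dowd z_1)=c'(f_1\dowd z_2)$ for all $z_1,z_2\in\partial^t S$ satisfying $\partial z_1=\partial z_2$. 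Taking $f=f_2\cdot f_1\in F$ and, for $x_1,x_2\in S$ with $\partial^{t+1}x_1=\partial^{t+1}x_2$, setting $z_i=\partial^t x_i$ (so $\partial z_1=\partial z_2$), I would compute, using \eqref{E:acrestr} to commute the action past $\partial^t$ (so that $\partial^t(f_1\dowd x_i)=f_1\dowd\partial^t x_i=f_1\dowd z_i$),
\[
c(f\dowd x_i)=c\bigl(f_2\dowd(f_1\dowd x_i)\bigr)=c'\bigl(\partial^t(f_1\dowd x_i)\bigr)=c'(f_1\dowd z_i),
\]
whence $c(f\dowd x_1)=c(f\dowd x_2)$, which is \eqref{E:imp2} for $t+1$.

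The only place genuine content enters is the well-definedness of $c'$: it holds precisely because $f_2$ was chosen to make $v\mapsto c(f_2\dowd v)$ factor through $\partial^t$, and supplying such an $f_2$ is exactly the job of the inductive hypothesis. I expect the remaining difficulty to be purely one of bookkeeping—verifying at each stage that the relevant products and actions are defined and that the two associations agree—which is routine but must be tracked carefully; this is what \eqref{E:locglo}, Lemma~\ref{L:asset}, Lemma~\ref{L:voass}, and Lemma~\ref{L:aba} are there to guarantee. A pleasant feature of this scheme is that the color count never grows: both applications, and the coloring $c'$, use the same $d$ colors.
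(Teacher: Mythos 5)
Your proposal is correct and follows essentially the same route as the paper: induction on $t$ with $d$ fixed, where each step composes one application of (ph) to $S$ (acting innermost) with one application of the inductive hypothesis to $F_1\dbullet S$ (acting outermost), and the key point is that the inductive hypothesis makes the pushed-down coloring $c'$ on $\partial^t(F_1\dbullet S)=F_1\dowd(\partial^t S)$ well defined. The only cosmetic difference is that the paper first restates (ph) in the equivalent form involving $\partial^t(F\dbullet S)$ and $\partial^{t+1}x$, whereas you perform that translation inline via Lemma~\ref{L:aba}.
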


\begin{proof} We will prove the conclusion of the theorem assuming that
{\em for every $d>0$, $t\geq 0$, and $S\in {\mathcal S}$ there is
$F\in {\mathcal F}$ such that $F\dbullet S$ is defined and for every
$d$-coloring $c$ of $\partial^{t} (F\dbullet S)$ there is $f\in F$
such that for $x_1, x_2\in S$}
\begin{equation}\label{E:tro}
\partial^{t+1} x_1 = \partial^{t+1} x_2 \Longrightarrow c(\partial^{t} (f\dowd x_1)) = c(\partial^{t} (f\dowd x_2)).
\end{equation}

Making this assumption is justified since it follows from condition (P) as shown by the following argument carried out by induction on $t$. For
$t=0$ the assumption above is simply (P). Now we go from $t$ to $t+1$.
Let $S\in {\mathcal S}$. Since $({\mathcal F}, {\mathcal S})$ is
a Ramsey domain, we have $\partial S\in {\mathcal S}$. So by the above assumption for $t$ there is $F\in {\mathcal F}$ such that
$F\dbullet \partial S$ is defined and for every $d$-coloring $c$ of $\partial^t(F\dbullet \partial S)$ there is $f\in F$ such that for $x_1,x_2\in \partial S$
\eqref{E:tro} holds. Now, again since $({\mathcal F}, {\mathcal S})$ is a Ramsey domain, there is $G\in {\mathcal F}$ such that
$G\dbullet S$ is defined and every element of $F$ is extended by an element of $G$. We claim that $G$ makes the assumption true for $t+1$.
Let $c$ be a $d$-coloring of
\[
\partial^{t+1}(G\dbullet S) = \partial^t(G\dowd \partial S).
\]
Since $F\dbullet \partial S\subseteq G\dowd \partial S$, $c$ gives a $d$-coloring of $\partial^t(F\dbullet \partial S)$. Thus, by our choice of $F$, there
is $f\in F$ such that for $x_1,x_2\in \partial S$ \eqref{E:tro} holds. Let $g\in G$ extend this $f$. Then for $y_1,y_2\in S$ with
$\partial^{t+2}y_1 = \partial^{t+2}y_2$,  we have $\partial^{t+1}(\partial y_1) = \partial^{t+1}(\partial y_2)$,
so
\[
c(\partial^{t}(f\dowd \partial y_1)) = c(\partial^{t}(f\dowd \partial y_2)).
\]
Since $f\dowd \partial y_1 = g\dowd \partial y_1$ and
$f\dowd \partial y_2 = g\dowd \partial y_2$,  it follows that
\[
c(\partial^{t+1}(g\dowd y_1)) =
c(\partial^{t}(g\dowd \partial y_1)) = c(\partial^{t}(g\dowd \partial y_2)) = c(\partial^{t+1}(g\dowd y_2)) ,
\]
as required.

Now, we prove the theorem making the assumption from the beginning of the proof.
Fix $d>0$. The argument is by induction on $t\geq 0$ for all $S\in
{\mathcal S}$. For $t=0$, the conclusion is clear since it requires
only that there be a non-empty $F\in {\mathcal F}$ with $F\dbullet
S$ defined, which is guaranteed by our assumption. Now we suppose
that the conclusion of the theorem holds for $t$ and we show it for
$t+1$. Apply our assumption stated at the beginning of the proof to
$d$, $t$, and $S$ obtaining $F_0\in {\mathcal F}$. Note that
$F_{0}\dbullet S\in {\mathcal S}$. Apply the inductive assumption
for $t$ to $F_{0}\dbullet S$ obtaining $F_1\in {\mathcal F}$. Note
that $F_{1}\dbullet (F_{0}\dbullet S)$ is defined, hence, since
$({\mathcal F}, {\mathcal S})$ is a Ramsey domain,
$(F_1\bullet F_0)\dbullet S$ is defined and, by Lemma~\ref{L:asset},
\begin{equation}\label{E:equalit}
(F_1\bullet F_0)\dbullet S = F_{1}\dbullet (F_{0}\dbullet S)
\end{equation}
Note that $F_1\bullet F_0 \in {\mathcal F}$, and we claim that it
works for $t+1$.

Let $c$ be a $d$-coloring of $(F_1\bullet F_0)\dbullet S$. By
\eqref{E:equalit}, we can consider it to be a coloring of
$F_{1}\dbullet (F_{0}\dbullet S)$. By the choice of $F_1$ there
exists $f_1\in F_1$ such that for $x, y\in S$ and $f, g\in F_0$,
\begin{equation}\label{E:mirac}
\partial^{t} (f\dowd x) = \partial^{t} (g\dowd y)
\Longrightarrow c(f_1\dowd (f\dowd x)) = c(f_1\dowd (g\dowd y)).
\end{equation}
Define a $d$-coloring ${\bar c}$ of $\partial^{t} (F_{0}\dbullet S)$
by letting for $f\in F_0$ and $x\in S$
\begin{equation}\label{E:mircont}
{\bar c}(\partial^{t} (f\dowd x)) = c(f_1\dowd (f\dowd x)).
\end{equation}
The coloring ${\bar c}$ is well-defined by \eqref{E:mirac}. By our
choice of $F_0$, there exists $f_0\in F_0$ such that for $x, y\in S$
\begin{equation}\label{E:miren}
\partial^{t+1} x = \partial^{t+1} y \Longrightarrow
{\bar c}(\partial^{t}(f_0\dowd x)) = {\bar c}(\partial^{t}(f_0\dowd
y)).
\end{equation}
Combining \eqref{E:miren} with \eqref{E:mircont}, we see that for
$x, y \in S$
\begin{equation}\notag
\partial^{t+1} x = \partial^{t+1} y \Longrightarrow c(f_1\dowd (f_0\dowd x))
= c(f_1\dowd (f_0\dowd y)).
\end{equation}
Now $f=f_1\cdot f_0$ is as required since by \eqref{E:equalit} and
Lemma~\ref{L:asset}, we have
\[
f_1\dowd (f_0\dowd x) = (f_1\cdot f_0)\dowd x \;\hbox{ and }\;
f_1\dowd (f_0\dowd y) = (f_1\cdot f_0)\dowd y,
\]
and the proof is completed.
\end{proof}

\begin{definition}
A Ramsey domain $({\mathcal F}, {\mathcal S})$ is called {\em vanishing}
if for every $S\in {\mathcal S}$ there is $t\in {\mathbb N}$ such that $\partial^t S$ consists of
one element.
\end{definition}

\begin{corollary}\label{C:mainco}
Let $({\mathcal F}, {\mathcal S})$ be a vanishing Ramsey domain.
If $({\mathcal F}, {\mathcal S})$ fulfills condition (P), then it fulfills condition (R).
\end{corollary}

\begin{proof}
The conclusion follows from Theorem~\ref{T:main2} since for each
$S\in {\mathcal S}$ there is $t\in {\mathbb N}$ with $\partial^t S$
having at most one element. For this $t$, the left hand side in
\eqref{E:imp2} holds for all $x_1, x_2\in S$.
\end{proof}

\section{Localizing the pigeonhole condition and the second abstract Ramsey theorem}\label{S:loc}

We formulate here a localized version (LP) of condition (P) and
prove in Theorem~\ref{T:hp} that, under mild assumptions, it implies
(P), making checking (P) much easier. Even though condition (LP)
can be stated for Ramsey domains over composition spaces, the proof of
Theorem~\ref{T:hp} requires introduction in Subsection~\ref{S:aux}
of a new piece of structure on a composition space, which is nevertheless
found in almost all concrete situations.

\subsection{Localized version (LP) of (P)}\label{Su:verhj}

One can think of condition (LP) in the following way. In condition
(P), we are given a coloring of $F\dbullet S$ and are
asked to find $f\in F$ making the coloring constant on each
equivalence class of the equivalence relation on $S$ that identifies $y_1, y_2\in S$ if
$\partial y_1=\partial y_2$.
Obviously, it is easier to fulfill the requirement of making the
coloring constant, by multiplying by some $f\in F$, on a single,
fixed equivalence class of this equivalence relation. Condition
(${\rm LP}$) makes just such a requirement. The price for
this weakening of the pigeonhole principle is paid by putting an additional
restriction on the element $f\in F$ fixing the color. We will
comment on this restriction after the condition is stated. First, we
introduce a piece of notation for equivalence classes of the
equivalence relation mentioned above. For $S\subseteq Z$ and $x_0\in
Z$, put
\[
S_{x_0} = \{ y\in S \mid \partial y = x_0\}.
\]
Also, for $F\subseteq A$, let
\[
F_a = \{ b\in F\mid b\hbox{ extends }a\}.
\]

Let $({\mathcal F}, {\mathcal S})$ be a Ramsey domain over a
composition space $(A, Z)$. The following criterion on $({\mathcal F},
{\mathcal S})$ turns out to be the right formalization of the local
version of (P).

\medskip

\noindent {\em {\bf Condition (LP).} For $d>0$, $S\in {\mathcal S}$,
and $x\in \partial S$, there is $F\in {\mathcal F}$ and $a\in
A$ such that $F\dbullet S$ is defined, $a\dowd x$ is defined, and
for every $d$-coloring of $F_a\dowd S_{x}$ there is
$f\in F_a$ such that $f\dowd S_{x}$ is monochromatic.}

\medskip

The equivalence relation on $S$ given by $\partial y_1 =
\partial y_2$ obviously has $\partial S$ as its
set of invariants, that is, two elements of $S$ are
equivalent if and only if their images in $\partial S$ under
the function $y\to \partial y$ are the same. In condition (LP), we consider the equivalence class given by $x\in
\partial S$ and we ask for $a\in A$ that acts on a part of
the set of invariants $\partial S$ including $x$ and is such
that each $d$-coloring can be stabilized on $S_x$ by
multiplication by some $f\in F$ that acts in a manner compatible
with $a$.

\subsection{Normed composition spaces}\label{S:aux}

We introduce here a new piece of structure on composition spaces.

\begin{definition} Let $(A,Z)$ be a composition space. We say that
$(A,Z)$ is {\em normed} if there is a function $|\cdot |\colon Z\to
D$, where $(D, \leq)$ is a partial order, such that for $x,y\in Z$,
$|x|\leq |y|$ implies that for all $a\in A$
\[
a\dowd y \hbox{ defined}\Rightarrow (a\dowd x\hbox{ defined and }
|a\dowd x|\leq |a\dowd y|).
\]
\end{definition}
A function $|\cdot |$ as in the above definition will be called a {\em norm}. In most
cases, for example in all cases considered in this paper, $(D,\leq)$ will be a linear order.
However, there are natural situations, occurring in a forthcoming work, in which $(D,\leq)$ is not linear. 
A remnant of linearity will always be retained as explained in Definition~\ref{D:linear} below.

\medskip

\noindent {\bf Example A5.}
We continue with Examples~A1--A4.
Define $|\cdot |\colon {\rm II}\to {\mathbb N}$ by
\begin{equation}\notag
|i| = \max {\rm range}(i),
\end{equation}
for $i\in {\rm II}$. It is easy to see that the function defined
above is a norm on $(B,Y)$; thus, $(B,Y)$ becomes a normed
composition space.

Checking that $({\mathcal F}, {\mathcal S})$ defined in Example~A2
fulfills (${\rm LP}$) amounts to an application of the standard
pigeonhole principle. Clearly $\mathcal S$ is vanishing. It follows from
Corollary~\ref{C:mainco2} below that $({\mathcal F}, {\mathcal S})$
is a Ramsey domain fulfilling (R), which gives the classical Ramsey theorem.

\subsection{Localized pigeonhole implies Ramsey}

We need one more definition.

\begin{definition}\label{D:linear}
A Ramsey domain $({\mathcal F}, {\mathcal S})$ over a normed composition space is called {\em linear} if
the image of $S$ under the norm is linear for each $S\in {\mathcal S}$.
\end{definition}

Here is the main theorem of this section.

\begin{theorem}\label{T:hp}
Let $({\mathcal F}, {\mathcal S})$ be a linear Ramsey domain over a normed composition space.
Assume that $\mathcal S$ consists of finite sets.
If $({\mathcal F}, {\mathcal S})$ fulfills ($\hbox{LP}$),
then $({\mathcal F}, {\mathcal S})$ fulfills (P).
\end{theorem}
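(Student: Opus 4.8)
The plan is to deduce (ph) from (lph) by induction on the number of $\partial$-classes, peeling off one class at a time and using the norm to control which element acts. Fix $d>0$ and $t\ge 0$. Since $\mathcal S$ consists of finite sets, for each $S\in\mathcal S$ the invariant set $\partial^{t+1}S$ is finite, and the classes of $\partial^{t}S$ are the sets $(\partial^{t}S)_x$ with $x\in\partial^{t+1}S$. I would prove the following by induction on $k$: \emph{for every $S\in\mathcal S$ and every $E\subseteq\partial^{t+1}S$ with $|E|\le k$ there is $F\in\mathcal F$ with $F\dbullet S$ defined such that every $d$-coloring of $F\dowd(\partial^{t}S)$ admits $f\in F$ with $f\dowd(\partial^{t}S)_x$ monochromatic for all $x\in E$.} Taking $E=\partial^{t+1}S$ gives exactly (ph). The base case of a single class is immediate from (lph): one restricts the given coloring to $F_a\dowd(\partial^{t}S)_x$ and applies (lph), obtaining the required $f\in F_a\subseteq F$.

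For the inductive step, let $|E|=k\ge 2$ and choose $x\in E$ with $|x|$ maximal (this is where the norm is used). Apply (lph) to $(d,t,S,x)$ to get $F_0\in\mathcal F$ and $a\in A$ with $F_0\dbullet S$ defined, $a\dowd x$ defined, and the single-class stabilization property holding inside $(F_0)_a$. Because $|x|$ is maximal in $E$, the norm axiom guarantees $a\dowd x'$ is defined for every $x'\in E$; hence, for every $f_0\in(F_0)_a$ and every $x'\in E$ the ``extends'' relation forces $f_0\dowd x'=a\dowd x'$. Writing $S_0=F_0\dbullet S\in\mathcal S$, $E''=E\setminus\{x\}$, and $a\dowd E''=\{a\dowd x'\colon x'\in E''\}$, this identity shows $a\dowd E''\subseteq\partial^{t+1}S_0$ and $|a\dowd E''|\le|E''|=k-1$. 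The inductive hypothesis applied to $S_0$ with the invariant set $a\dowd E''$ yields $F_1\in\mathcal F$ with $F_1\dbullet S_0$ defined that stabilizes, for every coloring, each class $(\partial^{t}S_0)_w$ with $w\in a\dowd E''$. I then set $F=F_1\bullet F_0$; this is legitimate because $F_1\dbullet(F_0\dbullet S)$ is defined, so \eqref{E:locglo} makes $F_1\bullet F_0$ and $(F_1\bullet F_0)\dbullet S$ defined, and Lemma~\ref{L:asset} identifies $F\dowd(\partial^{t}S)$ with $F_1\dowd(\partial^{t}S_0)$.

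The verification then mirrors the proof of Theorem~\ref{T:main2}. Given a $d$-coloring $c$ of $F\dowd(\partial^{t}S)$, view it as a coloring of $F_1\dowd(\partial^{t}S_0)$ and use $F_1$ to obtain $f_1\in F_1$ stabilizing each class indexed by $a\dowd E''$. For any $f_0\in(F_0)_a$ and any $y\in(\partial^{t}S)_{x'}$ with $x'\in E''$, the element $f_0\dowd y$ lies in $(\partial^{t}S_0)_{a\dowd x'}$, since $\partial(f_0\dowd y)=f_0\dowd\partial y=a\dowd x'$ by \eqref{E:acrestr}; hence $c((f_1\cdot f_0)\dowd y)=c(f_1\dowd(f_0\dowd y))$ is already constant on each such class for \emph{any} $f_0\in(F_0)_a$. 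To handle the remaining class $x$, I would define the induced coloring $c'$ on $(F_0)_a\dowd(\partial^{t}S)_x$ by $c'(z)=c(f_1\dowd z)$ and apply the (lph) property of $F_0$ to obtain $f_0\in(F_0)_a$ with $f_0\dowd(\partial^{t}S)_x$ being $c'$-monochromatic; then $f=f_1\cdot f_0\in F$ stabilizes every class in $E$.

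The main obstacle is exactly the bookkeeping in the inductive step: one must see that restricting to the slice $(F_0)_a$ collapses the surviving invariants $E''$ onto the strictly smaller set $a\dowd E''$ inside $\partial^{t+1}S_0$ — this is what makes the induction decrease — while at the same time the ``extends'' relation together with the maximality of $|x|$ ensures that the later multiplier $f_1$ routes each surviving class into precisely the class it is meant to stabilize, and, conversely, that fixing the color on class $x$ via $f_0\in(F_0)_a$ does not disturb the classes already handled by $f_1$. Arranging both requirements simultaneously, rather than stabilizing classes sequentially in a way that spoils earlier work, is the crux, and the norm together with the $F_a$-restriction built into (lph) are exactly the devices that make it possible.
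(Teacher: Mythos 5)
Your proposal is correct and follows essentially the same route as the paper: the paper also orders the classes $x\in\partial^{t+1}S$ by decreasing norm, applies (lph) once per class to produce factors $F_k$ and anchor elements $b_k\in A$ (your $a$), uses the norm axiom plus the ``extends'' relation to show that any $f\in(F_k)_{b_k}$ routes each surviving class into a single fixed class downstream, and then chooses the $f_k$ in the reverse order on a given coloring. Your packaging of this as an induction on the number of classes, rather than the paper's explicit $n$-step recursion with invariants, is only an organizational difference.
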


\begin{proof} Let $(A,Z)$ with $\cdot,\, \dowd,\, \partial, \, |\cdot|$
be the normed composition space over which $({\mathcal F}, {\mathcal S})$
is defined. For the sake of clarity, in this proof, expressions of
the form
\[
F_kF_{k-1}\cdots F_1S\;\hbox{ and }\; f_kf_{k-1}\cdots f_1x
\]
stand for
\[
F_{k}\dbullet (F_{k-1}\dbullet\cdots (F_{1}\dbullet S))\;\hbox{ and
}\; f_k\dowd (f_{k-1}.\cdots (f_1\dowd x)),
\]
respectively. In particular, $fx$ stands for $f\dowd x$.

Fix $d>0$. Let $S\in {\mathcal S}$. Since
$\partial S$ is finite, $\partial S\in {\mathcal S}$ and $({\mathcal F}, {\mathcal S})$ is linear,
we can list $\partial S$ as $x_1, x_2, \dots, x_n$ with
\[
|x_n|\leq |x_{n-1}|\leq \cdots \leq |x_1|.
\]
We produce $F_1, \dots, F_n\in {\mathcal F}$ and $b_1, \dots, b_n\in
A$ as follows. For $1\leq k\leq n+1$, after $k-1$-st step of the
induction is completed, we have constructed $F_1, \dots, F_{k-1}$,
$b_1, \dots, b_{k-1}$. They have the following properties:
\begin{enumerate}
\item[(a)] $F_{k-1}F_{k-2}\cdots F_1S$ is defined;

\item[(b)] $b_{k-1}b_{k-2} \cdots b_1 x_{l}$ is defined for
$k-1\leq l\leq n$;

\item[(c)] for $1\leq j\leq k-1$, for every $d$-coloring of
$F_j\dowd (F_{j-1}\cdots F_1S)_{b_{j-1}\cdots b_1x_j}$ there is $f_j\in
F_j$ extending $b_j$ such that
\[
f_j\dowd (F_{j-1}\cdots F_1S)_{b_{j-1}\cdots b_1x_j}
\]
is monochromatic;

\item[(d)] for $1\leq j\leq k-1\leq l\leq n$, if $f_j\in F_j$ extends $b_j$
and ${\tilde x}\in S$ is such that $\partial {\tilde x} = x_l$, then
\begin{equation}\notag
\partial (f_{k-1}f_{k-2}\cdots f_1 {\tilde x}) = b_{k-1} b_{k-2} \cdots b_1x_{l}.
\end{equation}
\end{enumerate}
We make step $k\leq n$ of the recursion. With the fixed $d$,
we apply (LP) to $F_{k-1}F_{k-2}\cdots F_1S$, which exists
by (a) and obviously is in ${\mathcal S}$, and to $b_{k-1}b_{k-2}
\cdots b_1 x_{k}\in Z$, which exists by (b). This is permissible. Indeed, $x_k\in \partial S$ and,
by (c), there are $f_j\in F_j$ extending $b_j$ for $1\leq j\leq k-1$, and so, by
(d) taken with $l=k$, we get
\[
b_{k-1}\cdots b_1 x_k\in \partial (F_{k-1} F_{k-2} \cdots
F_1S).
\]
This application of ($\hbox{LP})$ gives $F_k\in {\mathcal F}$ and
$b_k\in A$. Now (a), (b), and (c) follow immediately from our choice
of $F_k$ and $b_k$ and the assumption $|x_{l}|\leq |x_{k}|$ for
$l\geq k$. Point (d) is a consequence of (a) and (b) for $k$ and (d)
for $k-1$ by the following argument. Fix $k\leq l\leq n$. Let
$f_j\in F_j$ extend $b_j$,
for each $1\leq j\leq k$, and let ${\tilde x}\in S$ be such that
$\partial {\tilde x} = x_l$. Note that using (d) for $k-1$ and
with the fixed $l$, we get
\begin{equation}\label{E:triv}
\partial (f_{k-1}\cdots f_1{\tilde x}) = b_{k-1} \cdots b_1 x_{l}.
\end{equation}
Thus, since, by (b) for $k$, $b_k b_{k-1} \cdots b_1 x_{l}$ is defined, so is $b_k
\partial (f_{k-1}\cdots f_1 {\tilde x})$. Now, since $f_k$
extends $b_k$, we see that $f_k \partial (f_{k-1}\cdots f_1{\tilde x})$ exists and
\begin{equation}\label{E:triv2}
f_k \partial (f_{k-1}\cdots f_1{\tilde x}) = b_k
\partial (f_{k-1}\cdots f_1 {\tilde x}).
\end{equation}
Putting \eqref{E:triv} and \eqref{E:triv2} together, we get (d) for
$k$ since
\begin{equation}\notag
\begin{split}
\partial (f_{k} f_{k-1}\cdots f_1 {\tilde x}) &=
f_k \partial (f_{k-1}\cdots f_1{\tilde x})\\
&= b_k \partial (f_{k-1}\cdots f_1 {\tilde x}) = b_k b_{k-1}
\cdots b_1 x_{l}.
\end{split}
\end{equation}
Note that above $f_{k} f_{k-1}\cdots f_1 {\tilde x}$ is defined by (a) for $k$.

So the recursive construction has been carried out. Note that by (a)
\begin{equation}\label{E:trtr}
F_{n}F_{n-1}\cdots F_1S
\end{equation}
is defined. We can apply Lemma~\ref{L:voass} to the Ramsey domain
$({\mathcal F}, {\mathcal S})$ to see that
$(F_{n}\bullet (F_{n-1} \bullet\cdots\bullet F_1))\dbullet S$ is
defined as well. Now, $F_{n}\bullet (F_{n-1} \bullet\cdots\bullet
F_1)$ is an element of ${\mathcal F}$, and we claim that for each
$d$-coloring $c$ of
\[
(F_{n}\bullet (F_{n-1} \bullet\cdots\bullet F_1))\dbullet S
\]
there are $f_1\in F_1, \dots, f_n\in F_n$ such that for $x_1, x_2\in S$ we have
\begin{equation}\label{E:dein}
\partial x_1= \partial x_2\Longrightarrow c((f_{n}\cdot (f_{n-1}
\cdots f_1))\dowd x_1)= c((f_{n}\cdot (f_{n-1} \cdots f_1))\dowd
x_2).
\end{equation}
This will verify that $({\mathcal F}, {\mathcal S})$ fulfills (P).

Fix, therefore, a $d$-coloring $c$ of $(F_{n}\bullet (F_{n-1}
\bullet\cdots\bullet F_1))\dbullet S$. We recursively
produce $f_n\in F_n, \dots , f_1\in F_1$. Note first that since
\eqref{E:trtr} is defined, by Lemmas~\ref{L:voass},
we have that for each $1\leq k\leq n$
\begin{equation}\notag
\begin{split}
&(F_{n}\bullet (F_{n-1} \bullet\cdots\bullet F_1))\dbullet S\\
&= F_{n}\dbullet (F_{n-1}\dbullet\cdots (F_k\dbullet (F_{k-1}F_{k-2}\cdots F_1S))).
\end{split}
\end{equation}
Therefore, having produced $f_n, \dots, f_{k+1}$, we can consider
the $d$-coloring of $F_k\dbullet (F_{k-1} F_{k-2} \cdots
F_1S)$ given by
\[
fy \to c(f_n\cdots f_{k+1}f y),
\]
for $f\in F_k$ and $y\in F_{k-1} F_{k-2}\cdots F_1S$. By (c), we get $f_k\in F_k$ such that
\begin{equation}\label{E:coco}
\begin{split}
c(f_n\cdots f_{k+1} f_k y)& \hbox{ is constant for }\\
&y\in (F_{k-1} F_{k-2}\cdots F_1S)_{b_{k-1}
b_{k-2}\cdots b_1 x_k}
\end{split}
\end{equation}
and $f_k$ extends $b_k$.

We claim that $f_n, \dots, f_1$ produced this way witness that
\eqref{E:dein} holds. Let $y_1, y_2\in S$ be such that
$\partial y_1=
\partial y_2$, and let this common value be $x_k$ for some $1\leq
k\leq n$. For $i=1,2$, condition (d) gives
\begin{equation}\notag
b_{k-1}b_{k-2}\cdots b_{1}x_k
=\partial (f_{k-1}f_{k-2}\cdots f_{1}y_i).
\end{equation}
and so
\[
f_{k-1} f_{k-2}\cdots f_{1}y_i \in (F_{k-1} F_{k-2}\cdots
F_1 S)_{b_{k-1} b_{k-2}\cdots b_1 x_k},
\]
which in light of \eqref{E:coco} implies that
\begin{equation}\label{E:coeq}
c(f_{n}\cdots f_{k+1} f_k (f_{k-1}\cdots f_{1}y_1)) = c(f_{n}\cdots
f_{k+1} f_k (f_{k-1}\cdots f_{1}y_2)).
\end{equation}
Since \eqref{E:trtr} is defined, by Lemma~\ref{L:voass}, applied to
the Ramsey domain $({\mathcal F}, {\mathcal S})$, and by inductively
applying Lemma~\ref{L:asset}, we get that for $i=1,2$,
\[
f_{n} f_{n-1}\cdots f_{1} y_i = (f_{n}\cdot (f_{n-1}\cdots f_{1})) y_i.
\]
From this equality and from \eqref{E:coeq} the conclusion follows.
\end{proof}

The following corollary is an immediate consequence of
Theorem~\ref{T:hp} and Corollary~\ref{C:mainco}.

\begin{corollary}\label{C:mainco2}
Let $({\mathcal F}, {\mathcal S})$ be a linear vanishing Ramsey domain over a
normed composition space. Assume that $\mathcal S$ consists of finite sets.
If $({\mathcal F}, {\mathcal S})$ fulfills ($\hbox{LP}$), then it fulfills (R).
\end{corollary}

\subsection{Remarks on normed composition spaces}

The most involved algebraic notion introduced in the paper is the notion
of normed composition space. Below, we give a list of conditions that are
more symmetric than those defining normed composition spaces. We then prove
in Lemma~\ref{L:back} that the new conditions define a structure
that is essentially equivalent to a normed composition space. It is worth
remarking that all the normed composition spaces in the present paper, in \cite{So3}, and in \cite{So4}
fulfill the conditions below.

Let $(A, Z, \cdot, \dowd, \partial, |\cdot|)$ be such that $\cdot$
is a partial function from $A\times A$ to $A$, $\dowd$ is a partial
function from $A\times Z$ to $Z$, $\partial$ is a function from $Z$
to $Z$ and $|\cdot |$ is a function from $Z$ to a set with a partial
order $\leq$. We consider the following set of conditions:
\begin{enumerate}
\item[(a)] if $a\dowd (b\dowd z)$ and $(a\cdot b) \dowd z$ are defined for $a,b\in A$ and $z\in Z$, then
\[
a\dowd (b\dowd z) = (a\cdot b) \dowd z;
\]

\item[(b)] if $a\dowd z$ and $a\dowd \partial z$ are defined for $a\in A$ and $z\in Z$, then
\[
\partial(a\dowd z) = a\dowd \partial z;
\]

\item[(c)] $|\partial z|\leq |z|$ for each $z\in Z$;

\item[(d)] if $|y|\leq |z|$ and  $a\dowd z$ is defined for $a\in A$ and $y, z\in Z$, then $a\dowd y$ is defined and
$|a\dowd y|\leq |a\dowd z|$.
\end{enumerate}
Condition (a) is just the action property. Conditions (b), (c) and (d) say that each pair of functions constituting
the structure interact in a  natural way: action with truncation in (b), truncation with norm in (c), and
norm with action in (d).

\begin{lemma}\label{L:back}
If $(A, Z, \cdot, \dowd, \partial, |\cdot|)$ fulfills conditions (a)--(d), then $(A,Z)$ with $\cdot$, $\dowd$, $\partial$
and $|\cdot|$ is a normed composition space
\end{lemma}

\begin{proof} Almost all the properties defining a normed composition space are already explicit among (a)--(d).
One only needs to check that for $a\in A$ and $z\in Z$ if $a\dowd z$
is defined, then so is $a\dowd\partial z$, and this property follows
from (c) and (d).
\end{proof}

\section{Propagating the pigeonhole principle}\label{S:prop}

In this section, we prove two results that make it possible to
propagate condition (P) to new examples. In the first result, we
show how to obtain condition (P) on appropriately defined finite
products assuming it holds on the factors. The second result
involves the notion of interpretation of sets from a Ramsey domain in
another Ramsey domain. This result shows that if each
set from a Ramsey domain is interpretable in some Ramsey domain
fulfilling (P) then that Ramsey domain fulfills (P) as well.

\subsection{Products} We prove here a consequence of
Theorem~\ref{T:main2} that extends this theorem to products. First,
we set up a general piece of notation. Let $X_i$, $1\leq i\leq l$,
be sets, and let ${\mathcal U}_i$ be a family of subsets of $X_i$.
Let
\[
\bigotimes_{i\leq l} {\mathcal U}_i = \{ \prod_{i=1}^l U_i\mid U_i\in
{\mathcal U}_i\hbox{ for }i=1, \dots, l\}.
\]
When ${\mathcal U}_i= {\mathcal U}$ for all $1\leq i\leq l$, we
write ${\mathcal U}^{\otimes l}$ for $\bigotimes_{i\leq l} {\mathcal U}_i$.
Note that $\bigotimes_{i\leq l} {\mathcal U}_i$ consists of subsets of
$\prod_{i\leq l}X_i$.

Let $(A_i, Z_i)$, $1\leq i\leq l$, be composition spaces. The
multiplication and action on each of them is denoted by the same symbols
$\cdot$ and $\dowd$; the truncation on $Z_i$ is denoted by $\partial_i$. The
product of $(A_i, Z_i)$, $1\leq i\leq l$, is defined in the natural
coordinatewise way. Its underlying sets are
\[
\prod_{i=1}^l A_i\;\hbox{ and }\; \prod_{i=1}^lZ_i;
\]
the multiplication $(a_i)\cdot (b_i)$, for $(a_i), (b_i)\in
\prod_{i\leq l} A_i$, is declared to be defined precisely when
$a_i\cdot b_i$ is defined for each $i\leq l$ and then
\[
(a_i)\cdot (b_i) = (a_i\cdot b_i);
\]
the action $(a_i)\dowd (z_i)$, for $(a_i)\in \prod_{i\leq l} A_i$
and $(z_i)\in \prod_{i\leq l} Z_i$, is defined precisely when
$a_i\dowd z_i$ is defined for each $i\leq l$ and then
\[
(a_i)\dowd (z_i) = (a_i\dowd z_i);
\]
the truncation $\partial_\pi$ of $(z_i)$ is given by
\[
\partial_{\pi}(z_i) = (\partial_iz_i).
\]
It is easy to check, and we leave it to the reader, that the definitions above describe a composition space. We call it the product
composition space $(\prod_{i\leq l}A_i, \prod_{i\leq l}Z_i)$. If $(A_i, Z_i) = (A,Z)$ for each $i\leq
l$, we write $(A^l, Z^l)$ for $(\prod_{i\leq l} A_i, \prod_{i\leq
l}Z_i)$.

Let now $({\mathcal F}_i, {\mathcal S}_i)$ be Ramsey domains over
$(A_i, Z_i)$, $1\leq i\leq l$. We define the operations on
$\bigotimes_{i\leq l} {\mathcal F}_i$, $\bigotimes_{i\leq l} {\mathcal S}_i$ as
follows. We declare $(\prod_{i\leq l} F_i)\bullet (\prod_{i\leq l}
G_i)$ to be defined precisely when $F_i\bullet G_i$ is defined for
each $1\leq i\leq l$ and then we let
\[
(\prod_{i=1}^l F_i)\bullet (\prod_{i=1}^l G_i) = \prod_{i=1}^l
(F_i\bullet G_i),
\]
and $(\prod_{i\leq l} F_i)\dbullet (\prod_{i\leq l}S_i)$ is defined
if $F_{i}\dbullet S_i$ is defined for each $i$ and then
\[
(\prod_{i=1}^l F_i)\dbullet (\prod_{i=1}^l S_i) = \prod_{i=1}^l
(F_{i}\dbullet S_i).
\]

\begin{lemma}\label{L:hep}
For $1\leq i\leq l$, let $({\mathcal F}_i, {\mathcal S}_i)$ be Ramsey domains over composition spaces
$(A_i, Z_i)$. Then $(\bigotimes_{i\leq l} {\mathcal F}_i,
\bigotimes_{i\leq l}{\mathcal S}_i)$ is a Ramsey domain
over the composition space $(\prod_{i\leq l}A_i, \prod_{i\leq l}Z_i)$.
\end{lemma}

\begin{proof} All the points from the definition of Ramsey domain are clear. For example, to check point
(iii), assume that $(\prod_{i\leq l} F_i)\dbullet \partial_\pi (\prod_{i\leq l} S_i)$ is defined
for some $\prod_{i\leq l} F_i \in \bigotimes_{i\leq l}{\mathcal F}_i$ and $\prod_{i\leq l} S_i\in \bigotimes_{i\leq l} {\mathcal S}_i$.
This means
that for each $1\leq i\leq l$, $F_i\dbullet \partial_i S_i$ is defined, so we can find $H_i\in {\mathcal F}_i$
as required by point (iii) of the definition of Ramsey domain for $({\mathcal F}_i, {\mathcal S}_i)$. Then clearly point (iii)
holds for $\prod_{i\leq l} H_i\in \bigotimes_{i\leq l}{\mathcal F}_i$.
\end{proof}

The following proposition propagates the pigeonhole principle from
factors to products. In the proof of this proposition
Theorem~\ref{T:main2} is used even though we are checking only condition (P).

\begin{proposition}\label{P:prod}
Let $({\mathcal F}_i, {\mathcal S}_i)$, $1\leq i\leq l$, be Ramsey domains fulfilling (P).
Assume that each ${\mathcal S}_i$
consists of finite sets. Then $(\bigotimes_{i\leq l}{\mathcal F}_i, \bigotimes_{i\leq l}{\mathcal S}_i)$ is a Ramsey domain fulfilling (P).
\end{proposition}

\begin{proof} By Lemma~\ref{L:hep}, it suffices to check (P).

We define a composition space structure on
\[
A_*=\prod_{i=1}^l A_i,\; Z_*= \{ 0, \dots,
l-1\}\times\prod_{i=1}^lZ_i
\]
as follows. The multiplication on $A_*$ is the same as in the
product composition space $(\prod_{i\leq l}A_i, \prod_{i\leq l}Z_i)$. For
$(a_i)\in A_*$ and $(p,(z_i))\in Z_*$, we make $(a_i)\dowd
(p,(z_i))$ be defined if $a_i\dowd z_i$ is defined for all $i$ and
\[
(a_i)\dowd (p,(z_i)) = (p, (a_i\dowd z_i)).
\]
For $(p, (z_i))\in Z_*$, let
\[
\partial_*(p, (z_i)) = (p+ 1 \,({\rm mod }\, l), (y_i)),
\]
where $y_i = z_i$ if $i\not= p+1$ and $y_{p+1} = \partial_{p+1}z_{p+1}$.
It is easy to see that $(A_*, Z_*)$ is a composition space.

Define
\[
{\mathcal F}_* = \bigotimes_{i\leq l} {\mathcal F}_i
\]
and let ${\mathcal S}_*$ consist of all sets of the form
\[
\{ p\}\times S,
\]
where $p\in \{ 0, \dots , l-1\}$, and $S\in \bigotimes_{i\leq l} {\mathcal
S}_i$. Define $\bullet$ on ${\mathcal F}_*$ to coincide with
$\bullet$ on $\bigotimes_{i\leq l} {\mathcal F}_i$. Declare $F\dbullet (\{
p\}\times S)$ to be defined if and only if $F\dbullet S$ is defined
in $(\bigotimes_{i\leq l} {\mathcal F}_i, \bigotimes_{i\leq l} {\mathcal S}_i)$, and
let
\[
F\dbullet (\{ p\}\times S) = \{ p\}\times (F\dbullet S).
\]
It is easy to check that $({\mathcal F}_*, {\mathcal S}_*)$ is a Ramsey domain
over the composition space $(A_*, Z_*)$.

We claim that $({\mathcal F}_*, {\mathcal S}_*)$ fulfills (P). To prove it, fix $d>0$ and
\[
\{ p\}\times \prod_{i=1}^l S_i \in {\mathcal S}_*,
\]
for some $S_i\in {\mathcal S}_i$. For $i\not= p+1$, pick $F_i\in {\mathcal F}_i$ such that
$F_{i}\dbullet S_i$ is defined. Such $F_i$ exists by condition (P)
with the number of colors equal to $1$ for the Ramsey domains $({\mathcal
F}_i, {\mathcal S}_i)$. Now, we apply condition (P) to $({\mathcal
F}_{p+1}, {\mathcal S}_{p+1})$ with the following number of colors:
\begin{equation}\label{E:colnum}
d^{\prod_{i\not= p+1}|F_i\dbullet S_i|}.
\end{equation}
(Note that the number defined above is finite since $F_{i}\dbullet
S_i$ is finite, as it belongs to ${\mathcal S}_i$.) This application gives us
$F_{p+1}\in {\mathcal F}_{p+1}$ such that $F_{p+1}\dbullet S_{p+1}$
is defined and for each coloring of $F_{p+1}\dbullet S_{p+1}$ with the number of colors given
by \eqref{E:colnum} there is $f\in F_{p+1}$ such that for any two
$x, y \in S_{p+1}$ fulfilling
\begin{equation}\label{E:coord}
\partial_{p+1}x = \partial_{p+1}y,
\end{equation}
$f\dowd x$ and $f\dowd y$ get the same color. Having defined $F_i$,
$1\leq i\leq l$, note that
\[
\prod_{i=1}^l F_i\in {\mathcal F}_*,
\]
and that
\[
(\prod_{i=1}^l F_i)\dbullet (\{ p\}\times \prod_{i=1}^lS_i)
\]
is defined. Given a $d$-coloring $c$ of
\begin{equation}\notag
(\prod_{i=1}^l F_i)\dbullet (\{ p\}\times \prod_{i=1}^l S_i),
\end{equation}
which set is equal to
\begin{equation}\notag
\{ p\}\times F_1\dbullet S_1\times \cdots \times
F_{p+1}\dbullet S_{p+1}\times\cdots\times F_l\dbullet S_l,
\end{equation}
consider the coloring of $F_{p+1}\dbullet S_{p+1}$ defined by
\begin{equation}\label{E:colbig}
h\to (c(p,h_1, \dots , h_p, h, h_{p+2}, \dots, h_l)\mid
(h_i)_{i\not= p+1}\in \prod_{i\not= p+1} F_i\dbullet S_i).
\end{equation}
This is a coloring with the number of colors equal to
\eqref{E:colnum}. Therefore, there exists $f_{p+1}\in F_{p+1}$ such
that for any two $x, y \in S_{p+1}$ fulfilling \eqref{E:coord}, $f_{p+1}\dowd x$
and $f_{p+1}\dowd y$ get the same color. Pick $f_i\in F_i$ for
$i\not= p+1$ arbitrarily. With these choices $(f_i)$ is an element
of $\prod_{i\leq l} F_i$. Note now that for
\[
(p, (x_i)),\, (p, (y_i)) \in \{ p\}\times \prod_{i=1}^l S_i
\]
we have
\begin{equation}\label{E:dereq}
\partial_*(p, (x_i)) =\partial_*(p, (y_i))
\end{equation}
precisely when $x_i = y_i$ for $i\not= p+1$ and \eqref{E:coord}
holds for $x_{p+1}$ and $y_{p+1}$. This observation allows us to say
that the definition of the coloring in \eqref{E:colbig} and our
choice of $f_{p+1}$ imply that if \eqref{E:dereq} holds, then
\[
c((f_i)\dowd (p, (x_i))) = c((f_i)\dowd (p, (y_i))).
\]
Thus, indeed, $({\mathcal F}_*, {\mathcal S}_*)$ is a Ramsey domain with (P).

Now apply Theorem~\ref{T:main2} (with $t=l$) to the Ramsey domain
$({\mathcal F}_*, {\mathcal S}_*)$, which has (P), while keeping in
mind that $\bigotimes_{i\leq l} {\mathcal F}_i= {\mathcal F}_*$ and that for $x\in \prod_{i\leq l}Z_i$, we have
\[
(0, \partial_\pi x) = \partial_*^{ l} (0, x).
\]
The proposition follows.
\end{proof}

\noindent {\bf Example B6.}
We continue with Examples~B1--B5.
Let $l\in {\mathbb N}$. By Proposition~\ref{P:prod} and Example~B5, the Ramsey domain
$({\mathcal F}_0^{\otimes l}, {\mathcal S}_{K_0}^{\otimes l})$ over the product composition space $({\rm
IS}^l, X_{K_0}^l)$ fulfills (P). This fact will be used in Subsection~\ref{Su:HJ}.

\subsection{Interpretations}

We introduce here a notion of interpretability. Its importance is contained in Proposition~\ref{P:inter}. It is a
general notion and we will need its full generality when proving the Hales--Jewett theorem.

\begin{definition} Let $({\mathcal F}, {\mathcal R})$ and $({\mathcal G}, {\mathcal S})$
be Ramsey domains over composition spaces $(A, X)$ and $(B, Y)$, respectively.
Let $S\in {\mathcal S}$. We say that $S$
is {\em interpretable in} $({\mathcal F}, {\mathcal R})$ if there
exists $R\in {\mathcal R}$ and a function $\alpha\colon S\to R$ such that
\begin{enumerate}
\item[(i)] for $y_1, y_2\in S$,
\[
\partial y_1 = \partial y_2\Longrightarrow \partial\alpha(y_1) =
\partial\alpha(y_2);
\]

\item[(ii)] if $F\dbullet R$ is defined for some $F\in {\mathcal
F}$, then there exist $G\in {\mathcal G}$, with $G\dbullet S$
defined, and a function $\phi\colon F\to G$ such that for $f_1,
f_2\in F$ and $y_1,y_2\in S$
\begin{equation}\label{E:inter}
f_1\dowd \alpha(y_1) = f_2\dowd \alpha(y_2)\Longrightarrow
\phi(f_1)\dowd y_1 = \phi(f_2)\dowd y_2.
\end{equation}
\end{enumerate}
\end{definition}

\begin{proposition}\label{P:inter}
Let $({\mathcal G}, {\mathcal S})$ be a Ramsey domain. If each $S\in {\mathcal S}$ is
interpretable in a Ramsey domain fulfilling condition (P), then $({\mathcal G},
{\mathcal S})$ fulfills condition (P).
\end{proposition}

\begin{proof}
Let $S\in {\mathcal S}$ and $d>0$ be given. Let
$({\mathcal F}, {\mathcal R})$ be a Ramsey domain with (P) over
$(A, X)$ in which $S$ is interpretable. Find $R\in {\mathcal
R}$ and $\alpha\colon S\to R$ as in the definition of interpretability. Since
$({\mathcal F}, {\mathcal R})$ fulfills (P), we can find $F\in
{\mathcal F}$ such that $F\dbullet R$ is defined and for each
$d$-coloring $c'$ of $F\dbullet R$ there exists $f\in F$
such that for all $x_1, x_2\in S$ we have
\begin{equation}\label{E:colpr}
\partial x_1 = \partial x_2\Longrightarrow c'(f\dowd x_1) = c'(f\dowd x_2).
\end{equation}
For $F$ given above, find $G\in {\mathcal G}$ such that $G\dbullet
S$ is defined and $\phi\colon F\to G$ for which \eqref{E:inter}
holds. Assume we have a $d$-coloring $c$ of $G\dbullet S$.
Define a $d$-coloring $c'$ of $F\dbullet R$ as an
arbitrary extension to $F\dbullet R$ of the function given by
\[
c'(f\dowd \alpha(y)) = c(\phi(f)\dowd y),
\]
where $f\in F$ and $y\in S$. Note that $c'$ is well
defined by \eqref{E:inter}. For this $c'$, find $f\in F$ for which
\eqref{E:colpr} holds. Let now $y_1, y_2\in T$ be such that
\begin{equation}\label{E:stap}
\partial y_1=\partial y_2.
\end{equation}
Since condition (i) in the definition of interpretability holds for
$\alpha$, \eqref{E:stap} gives
\[
\partial \alpha(y_1) = \partial\alpha(y_2).
\]
Therefore, by the definition of $c'$, by the choice of $f$ and since
$\alpha(y_1), \alpha(y_2)\in S$, we get
\[
c(\phi(f)\dowd y_1) = c'(f\dowd \alpha(y_1)) = c'(f\dowd
\alpha(y_2))= c(\phi(f)\dowd y_2).
\]
Thus, we see that \eqref{E:stap} implies the above equality. It follows
that $\phi(f)\in G$ is as required by condition (P) for the
coloring $c$.
\end{proof}

The proposition above will be applied in Section~\ref{S:appl}.

\section{Examples of composition spaces and Ramsey domains}\label{S:coram}

The remainder of the paper illustrates the theoretical results of the earlier sections. It
contains applications of the general results proved so far to
particular cases. For the most part, these applications involve only formulating new notions
and interpreting
some statements as other statements.

\subsection{Examples of truncations}\label{Su:extrnc}

\subsubsection{Forgetful truncation of rigid surjections}

The first type of truncation we introduce is obtained by forgetting
the largest value of a function. It is defined on rigid surjections.
We call it the {\em forgetful truncation} and we define it as
follows. Let $s\colon [L]\to [K]$ be a rigid surjection. If $K>0$,
then $L>0$, and we let
\[
L_0 = \min \{ y\in [L]\mid s(y) = K\}.
\]
Define
\begin{equation}\label{E:tr1a}
\partial_f s = s\upharpoonright [L_0-1].
\end{equation}
If $K=0$, then $L=0$ and $s$ is the empty function, and we let
\begin{equation}\label{E:tr1b}
\partial_f \emptyset = \emptyset.
\end{equation}
Thus, unless $s$ is empty, $\partial_f s$ forgets the largest value
of $s$ while remaining a rigid surjection. Unless $s$ is empty,
$\partial_f s$ is a proper restriction of $s$.

\subsubsection{Confused truncation of surjections}

Another way of truncating a surjection is obtained by confusing the
largest value with the one directly below it. This type of
truncation is defined on non-empty surjections. We define the {\em
confused truncation} as follows. Let $v\colon [L]\to [K]$ be a
surjection with $K>0$. Define for $y\in [L]$
\begin{equation}\label{E:tr2a}
(\partial_{c}v)(y) =
\begin{cases}
v(y), &\text{ if $v(y)< K$;}\\
\max(1, K-1), &\text{ if $v(y)= K$.}
\end{cases}
\end{equation}
Note that $\partial_c v\colon [L]\to [\max(1, K-1)]$ is a
surjection. The confused truncation when applied to a non-empty
rigid surjection gives a rigid surjection.

\subsection{Examples of composition spaces}\label{Su:exba}

Three composition spaces were defined in Examples~A3, B3, and B6.
In this section, we describe a number of new composition spaces. They are
used in the proofs of Ramsey-type results later on. One more
composition spaces are given in Section~\ref{S:wal}.

\medskip

\noindent {\bf Normed composition space $(A_1, X_1)$.} Let $A_1={\rm RS}$
and
\[
X_1= \{ v\in {\rm S}\mid v\not=\emptyset\}.
\]
For $s_1, s_2\in A_1$ let $s_1\cdot s_2$ be defined when the
canonical composition $s_2\circ s_1$ is defined, and let
\[
s_1\cdot s_2 = s_2\circ s_1.
\]
For $s\in A_1$ and $v\in X_1$, let $s\dowd v$ be defined precisely
when the canonical composition $v\circ s$ is defined and let
\[
s\dowd v = v\circ s.
\]
We equip $X_1$ with the confused truncation $\partial_c$ given by
\eqref{E:tr2a}. Define a norm $|\cdot |\colon X_1\to {\mathbb N}$ by
letting $|v| = L$ for $v\in X_1$ with $v\colon [L]\to [K]$.

The following lemma is straightforward to prove.
\begin{lemma}\label{L:11}
$(A_1, X_1)$ with the operations defined above is a normed
composition space.
\end{lemma}

\medskip

\noindent {\bf Normed composition space $(A_2, X_2)$.} Let $A_2= X_2={\rm
RS}$. We define the multiplication on $A_2$ by the same formula
\[
s_2\cdot s_1 = s_1\circ s_2,
\]
for $s_1, s_2\in A_2$, and the action of $A_2$ on $X_2$ by
\[
t\dowd s = s\circ t,
\]
for $t\in A_2$ and $s\in X_2$, where all the compositions are
canonical compositions of rigid surjections and they are taken under
the assumptions under which canonical composition is defined. We
equip $X_2$ with the forgetful truncation $\partial_f$ given by
\eqref{E:tr1a} and \eqref{E:tr1b}. Define $|\cdot |\colon X_2\to
{\mathbb N}$ by letting for $t\colon [L]\to [K]$, $|t| = L$.

The following lemma is again straightforward to prove.
\begin{lemma}\label{L:22}
$(A_2, X_2)$ with the operations defined above is a normed
composition space.
\end{lemma}

\medskip

\noindent {\bf Normed composition space $(A_3, X_3)$.} Let $A_3 = X_3 =
{\rm AS}$. Given $(s,p), (t,q)\in {\rm AS}$, $s,p\colon [L]\to [K]$ and
$t,q\colon [N]\to [M]$, we let $(t,q)\cdot (s,p)$ and $(t,q)\dowd
(s,p)$ be defined if $M\geq L$ and in that case we let
\begin{equation}\notag
(t,q)\cdot (s,p) = (t,q)\dowd (s,p) = ((s\circ t)\upharpoonright
{\rm dom}(p\circ q),\, p\circ q),
\end{equation}
where all $\circ$ on the right hand side are canonical compositions,
and the left hand side is defined under the conditions under which
the canonical compositions on the right hand side are defined. We
also define a truncation $\partial$ on $X_3$ by
\begin{equation}\label{E:traug}
\partial (s,p) = (s\upharpoonright {\rm dom}(\partial_f p),\, \partial_f p),
\end{equation}
where $\partial_f$ is the forgetful truncation. Furthermore, we
define $|\cdot |\colon {\rm AS}\to {\mathbb N}$ by
\[
|(s,p)| = L
\]
if $s,p\colon [L]\to [K]$. We leave checking the following easy
lemma to the reader.

\begin{lemma}\label{L:as}
$(A_3, X_3)$ with the operations defined above is a normed
composition space.
\end{lemma}

\subsection{Examples of Ramsey domains}\label{Su:exact}

We give here examples of Ramsey domains that are relevant to
further considerations. Three Ramsey domains were already defined
in Examples~A2, B2, and B6, and one more will be defined in Section~\ref{S:wal}.

\medskip

\noindent {\bf Ramsey domain $({\mathcal F}_1, {\mathcal S}_1)$ over $(A_1,
X_1)$.} The family ${\mathcal F}_1$ consists of all sets of the form
\begin{equation}\label{E:fuf}
F_{N,M, L_0} = \{ s\in {\rm RS}\mid s\colon [N]\to [M],\;
s\upharpoonright [L_0] = {\rm id}_{[L_0]}\} ,
\end{equation}
for $0< L_0\leq M\leq N$. The family ${\mathcal S}_1$ consists of
sets of the form $S_{L, v_0}$ that are defined as follows. Let
$v_0\colon [L_0]\to [K_0]$ be a surjection for some $0< K_0\leq L_0$
and let $L_0\leq L$. Put
\[
S_{L, v_0} = \{ v\in {\rm S}\mid v\colon [L']\to [K_0]\hbox{ for
some } L\geq L'\geq L_0,\hbox{ and } v\upharpoonright [L_0] = v_0\}.
\]

We let $F_{Q,P,N_0}\bullet F_{M,L,K_0}$ be defined if $K_0=N_0$ and
$P=M$ and in that case we put
\[
F_{Q,M,K_0}\bullet F_{M,L,K_0}= F_{Q,L,K_0}.
\]
We let $F_{P, N, M_0}\dbullet S_{L,v_0}$ be defined, where
$v_0\colon [L_0]\to [K_0]$, if $L_0=M_0$ and $L=N$ and in that case
we set
\[
F_{P, L, L_0}\dbullet S_{L,v_0} = S_{P, v_0}.
\]
We leave checking the following easy lemma to the reader.
\begin{lemma}\label{L:actf1}
\begin{enumerate}
\item[(i)] $\partial_c S_{L,v_0} = S_{L, \partial_c v_0}$.

\item[(ii)] $({\mathcal F}_1, {\mathcal S}_1)$ is a Ramsey domain
over $(A_1, X_1)$.
\end{enumerate}
\end{lemma}

\medskip

\noindent {\bf Ramsey domain $({\mathcal F}_2, {\mathcal S}_2)$ over $(A_2, X_2)$.}
Both ${\mathcal F}_2$ and ${\mathcal S}_2$ consist of two types of sets. We start with
defining ${\mathcal F}_2$. Its elements are sets $F_{N,M, L_0}$ given by \eqref{E:fuf} and
sets
\[
G_{N,M, L_0} = \{ s\in {\rm RS}\mid s\colon [N']\to [M]\hbox{ for } L_0\leq N'\leq N,\; s\upharpoonright [L_0] = {\rm id}_{[L_0]}\}
\]
for $0<L_0\leq M\leq N$. Also let
\[
G_{0,0,0} = \{ \emptyset\}.
\]
The operation $\bullet$ on ${\mathcal F}_2$ is defined only in the following situations:
\[
F_{N,M,K_0}\bullet F_{M,L,K_0}\hbox{ and } G_{N,M,K_0}\bullet G_{M,L,K_0},
\]
and we let
\[
F_{N,M,K_0}\bullet F_{M,L,K_0}= F_{N,L,K_0}\hbox{ and }G_{N,M,K_0}\bullet G_{M,L,K_0}= G_{N,L,K_0}.
\]

Define the family ${\mathcal S}_2$ to consist of all sets of the
following two forms. Let $s_0\colon [L_0]\to [K_0]$ be a rigid surjection,
for some $L_0\geq K_0>0$, and let $K\geq K_0$, $L\geq L_0$, and $L\geq K$. Put
\begin{equation}\notag
S_{L,K, s_0} = \{ s\in {\rm RS}\mid s\colon [L]\to [K] \hbox{ and }
s\upharpoonright [L_0] = s_0\},
\end{equation}
and
\begin{equation}\notag
T_{L,K, s_0} = \{ s\in {\rm RS}\mid s\colon [L']\to [K]\hbox{ for
some } L\geq L'\geq L_0,\hbox{ and } s\upharpoonright [L_0] = s_0\}.
\end{equation}
Put also
\[
T_{0,0,\emptyset} = \{ \emptyset\}.
\]
Let the operation $\dbullet$ be defined only in the following two situations:
\[
F_{M, L, L_0}\dbullet S_{L,K,s_0} \hbox{ and }G_{M, L, L_0}\dbullet T_{L,K,s_0},
\]
where $s_0\colon [L_0]\to [K_0]$. In these situations, we let
\[
F_{M, L, L_0}\dbullet S_{L,K,s_0} = S_{M,K s_0}\hbox{ and }G_{M, L, L_0}\dbullet T_{L,K,s_0} = T_{M,K, s_0}.
\]

The proof of the following lemma amounts to easy checking. We leave it to the reader. In point (i) of this lemma,
parameters $L,K,s_0$ vary over the values for which the appropriate sets ($S_{L,K,s_0}$ and $T_{L,K,s_0}$) are defined.
Also in connection with the closure of ${\mathcal S}_2$ under truncation, note that for each rigid surjection $t\colon [L]\to [K]$, $\{ t\} = T_{L,K,t}$.

\begin{lemma}\label{L:g22}
\begin{enumerate}
\item[(i)] Let $[K_0]$ be the image of $s_0$. Then
\begin{equation}\notag
\begin{split}
&\partial_f S_{L,K,s_0}= T_{L-1, K-1, s_0}\;\hbox{ if } K>K_0>0;\;\;
\partial_f S_{L,K_0,s_0} = \{ \partial_f s_0\};\\
&\partial_f T_{L,K,s_0}= T_{L-1, K-1, s_0}\;\hbox{ if }
K>K_0>0;\;\;\partial_f T_{L,K_0,s_0} = \{ \partial_f s_0\}.
\end{split}
\end{equation}

\item[(ii)] $({\mathcal F}_2, {\mathcal S}_2)$ is a Ramsey domain over $(A_2, X_2)$.
\end{enumerate}
\end{lemma}

\medskip

\noindent {\bf Ramsey domain $({\mathcal F}_3, {\mathcal S}_3)$ over $(A_3, X_3)$.} In the
definitions below, we are slightly less general than in the
definitions of Ramsey domains described so far. As before, the
Ramsey domains consist of sets of elements of $A_3$ and $X_3$ that
map a given $[L]$ or its initial segment to a given $[K]$, but we refrain from considering
such sets with the additional requirement that elements in them
start with a fixed augmented surjection. This additional generality
can be easily achieved, but it is not needed in our application.

For $L\geq K>1$ or $L=K=1$, let
\begin{equation}\notag
F_{L,K} = \{ (s,p)\in {\rm AS}\mid s,p\colon [L]\to [K] \hbox{ and
} s^{-1}(K) = \{ L\}\},
\end{equation}
and, for $L\geq K>0$ or $L=K=0$, let
\begin{equation}\notag
G_{L,K} = \{ (s,p)\in {\rm AS}\mid s,p\colon [L']\to [K] \hbox{
for some }0< L'\leq L \}.
\end{equation}
Note that $ G_{0,0} = \{ (\emptyset, \emptyset)\}$.
Let ${\mathcal F}_3={\mathcal S}_3$ consist of all these sets defined above.

The two operations $\bullet$ and $\dbullet$ are equal to each other and they are defined
precisely in the situations described below and with the results specified below:
\[
F_{M,L}\bullet F_{L,K} = F_{M,L}\dbullet F_{L,K} = F_{M,K}.
\]
and
\[
G_{M,L}\bullet G_{L,K} = G_{M,L}\dbullet G_{L,K} = G_{M,K}.
\]

Recall definition \eqref{E:traug} of the truncation $\partial$ on
$(A_3, X_3)$. The following lemma is straightforward to check.
\begin{lemma}\label{L:SD}
\begin{enumerate}
\item[(i)] For $1<K\leq L$,
\[
\partial F_{L,K} = G_{L- 1, K- 1}\;\hbox{ and }\;
\partial G_{L,K} = G_{L- 1, K- 1},
\]
and, for $1\leq L$,
\[
\partial F_{0,0} = G_{0,0}\;\hbox{ and }\;\partial G_{L,1} = \partial G_{0,0} = G_{0,0}.
\]

\item[(ii)] $({\mathcal F}_3, {\mathcal S}_3)$ with the operations defined above is a Ramsey domain over $(A_3, X_3)$.
\end{enumerate}
\end{lemma}

\section{Applications}\label{S:appl}

In this section, we give applications of the methods developed in
the paper. We give two proofs in detail, that of the Hales--Jewett
theorem, in Subsection~\ref{Su:HJ}, and that of the self-dual Ramsey
theorem, in Subsection~\ref{Su:self}, as these two proofs are of
more interest than the other ones. These two proofs illustrate how
the results of Sections~\ref{S:loc} and \ref{S:prop} can be applied:
the proof of the Hales--Jewett theorem uses
Propositions~\ref{P:prod} and \ref{P:inter}, the proof of the
self-dual Ramsey theorem uses Theorem~\ref{T:hp}. Additionally, in
Subsection~\ref{Su:GR}, we sketch how to obtain the
Graham--Rothschild theorem and, in Section~\ref{S:wal}, we
describe a limiting case that is related to the considerations of
\cite{IrSo05}.

We prove these theorems here in the form in which they are stated in Subsection~\ref{Su:refor}, that is,
in terms of injections and surjections. The statements in terms of parameter sets, partitions and subsets
are given in Subsection~\ref{Su:self}.  In Subsection~\ref{Su:transl}, we described the way of
translating these statements from one form to the other.

\subsection{The Hales--Jewett theorem}\label{Su:HJ}

We prove below a theorem that combines into one the
usual Hales--Jewett theorem \cite{HeJe63} and Voigt's version of
this theorem for partial functions \cite[Theorem 2.7]{Vo80} as phrased in Subsection~\ref{Su:refor}. One
gets the classical Hales--Jewett theorem from the statement below
by setting $L=L_0+1$, $L_0=K_0$, and $v_0 = {\rm id}_{[K_0]}$ in the
assumption and $L'=L$ in the conclusion. Indeed, with the notation as in the statement below,
if the domains of both $v_1$ and $v_2$ are equal to $[L]$, then $v_1\circ s_0$ and $v_2\circ s_0$ get the
same color. One derives the Voigt
version for the same values of the parameters in the assumption and
for $L'<L$ in the conclusion. Indeed, the color of $v\circ s_0$ for $v\colon [L']\to [K_0]$ depends
only on $L'$, so one application of Dirichlet's pigeonhole principle gives Voigt's version of the Hales--Jewett
theorem.

\medskip

\noindent {\bf Hales--Jewett, combined version.} {\em Given $d>0$,
$0<K_0\leq L_0\leq L$ and a surjection $v_0\colon [L_0]\to [K_0]$,
there exists $M\geq L_0$ with the following property. For each
$d$-coloring $c$ of
\[
\{ v\colon [M']\to [K_0]\mid L_0\leq M'\leq M\hbox{ and }
v\upharpoonright [L_0] = v_0\}
\]
there exists a rigid surjection $s_0\colon [M]\to [L]$ such that
$s_0\upharpoonright [L_0] ={\rm id}_{[L_0]}$ and
\[
c(v_1\circ s_0) = c(v_2\circ s_0)
\]
whenever $v_1, v_2\colon [L']\to [K_0]$, for the same $L_0\leq
L'\leq L$, and $v_1\upharpoonright [L_0] = v_2\upharpoonright [L_0]=
v_0$.}

\medskip

We will use the Ramsey domain $({\mathcal F}_1, {\mathcal S}_1)$ defined in Subsection~\ref{Su:exact} and proved to be
a Ramsey domain in Lemma~\ref{L:actf1}.

The proof of the following lemma is an application of the notion of interpretability.

\begin{lemma}\label{L:preHJ}
$({\mathcal F}_1, {\mathcal S}_1)$ fulfills condition (P).
\end{lemma}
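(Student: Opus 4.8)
The plan is to deduce the lemma from the interpretability principle, Proposition~\ref{P:inter}. Since the goal is that $(\mathcal{F}_1, \mathcal{S}_1)$ be pigeonhole, it suffices to show that every pair $(S,t)$ with $S \in \mathcal{S}_1$ and $t \in \mathbb{N}$ is interpretable in some pigeonhole actoid of sets. The actoid I would interpret into is the product actoid $(\bigotimes_l \mathcal{F}_0, \bigotimes_l \mathcal{S}_{K_0})$ of Example~B5, which is pigeonhole by Proposition~\ref{P:prod}; the alphabet parameter $K_0$ and the number of factors $l$ will be allowed to depend on the chosen $S$ and $t$. This is the step that realizes the ``two applications'' bookkeeping from the introduction: the product actoid already carries a pigeonhole principle, and interpretability transports it to $(\mathcal{F}_1, \mathcal{S}_1)$.

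First I would normalize the alphabet. Writing $S = S_{L,v_0}$ with $v_0 \colon [L_0] \to [K_0']$, Lemma~\ref{L:actf1}(ii) gives $\partial_c^t S_{L,v_0} = S_{L, \partial_c^t v_0}$, so every $v \in \partial_c^t S$ is a surjection $v \colon [L'] \to [K_1]$ with $L_0 \le L' \le L$ extending $w_0 = \partial_c^t v_0$, where $K_1 = \max(1, K_0' - t)$. I would therefore take the target background to be $({\rm IS}^{l}, X_{K_1}^{l})$ with its coordinatewise truncation $\partial_\pi$, each factor carrying the confused-type truncation of Example~B3 that matches $\partial_c$, and with $l$ chosen large enough to record the entire value-structure of such surjections.

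The core of the proof is the construction of the interpreting map $\alpha \colon \partial_c^t S \to \partial_\pi^{s}\big(\prod_i S_{K_i}\big)$ together with, for each admissible $F$, an accompanying $\phi \colon F \to G$. For $\alpha$ I would encode a surjection $v \colon [L'] \to [K_1]$ as a tuple of line objects, one coordinate per elementary piece of $v$, arranged so that (a) a single confused truncation $\partial_c$ of $v$ corresponds to one coordinatewise truncation $\partial_\pi$ of $\alpha(v)$, and (b) the canonical composition $v \circ s$ with a rigid surjection is mirrored by the coordinatewise action $f \dowd \alpha(v)$ of a tuple $f$ of increasing surjections. Property (a) is exactly what condition (i) of interpretability demands: if $\partial_c v_1 = \partial_c v_2$, then $v_1, v_2$ differ only in how they split their two top values, a distinction that a single application of $\partial$ erases coordinatewise, so $\partial_\pi \alpha(v_1) = \partial_\pi \alpha(v_2)$. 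For $\phi$ I would send a tuple $f = (p_1, \dots, p_l)$ of increasing surjections to the single rigid surjection obtained by reassembling the $p_i$ in the way an $l$-dimensional combinatorial subspace is read off as a parameter word, i.e.\ a rigid surjection fixing an initial segment; property (b) then yields the implication \eqref{E:inter}, and the bookkeeping that $G \dbullet T$ is defined, that is, that the reassembled rigid surjection has the right size to land back in $\mathcal{F}_1$, is routine from the sizes of $F$ and $S$.

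The main obstacle is precisely this intertwining of structure. Matching the confused truncation of a genuine, not necessarily monotone, surjection against the product truncation of a layered encoding, and matching canonical composition with a rigid surjection against the coordinatewise action of a tuple of increasing surjections, is where the genuine combinatorics of the reduction resides; it is also the point at which the Shelah-type dependence of $l$, and hence of the eventual bounds, on $L$ and $K_1$ is forced. Once $\alpha$ and $\phi$ are in place, the verification of (i) and (ii) is a direct unwinding of the definitions, and I would only ever need the implication in (i), never an equivalence, which leaves welcome slack in how coarsely $\alpha$ may encode $v$.
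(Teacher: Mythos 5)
Your strategy coincides with the paper's: interpret each $(S_{L,v_0},t)$ in the product actoid $(\bigotimes_l{\mathcal F}_0,\bigotimes_l{\mathcal S}_{K})$ of Example~B5, which is pigeonhole by Proposition~\ref{P:prod}, and then invoke Proposition~\ref{P:inter}. Your preliminary normalization via Lemma~\ref{L:actf1}(ii), replacing $v_0$ by $\partial_c^t v_0$ so that only the case $t=0$ needs an explicit interpretation, is a step the paper leaves implicit, and it is a sensible way to make the reduction clean.

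However, the entire mathematical content of this lemma is the pair $(\alpha,\phi)$, and the proposal stops exactly where that content begins: the ``line objects,'' the reassembly map, and the verification of \eqref{E:inter} are described only by the properties you want them to have. Three concrete items must still be supplied. First, the line object attached to a value $k$ has to be a specific element of $S_3$ (in the paper: $K_0$ at $1$, $k$ at $2$, $\max(1,K_0-1)$ at $3$), chosen so that the truncation of Example~B3 identifies the lines for $k=K_0$ and $k=K_0-1$ and no others; this is what turns your heuristic for condition (i) into a proof. Second, $\phi(\bar p)$ must be written down: it fixes $[L_0]$, sends the ``moving'' part $p_i^{-1}(2)$ of the $i$-th block to a fresh letter $L_0+i$, and sends the constant parts $p_i^{-1}(1)$ and $p_i^{-1}(3)$ to positions $l_0,l_1\in[L_0]$ with $v_0(l_0)=K_0$ and $v_0(l_1)=\max(1,K_0-1)$, which exist by surjectivity of $v_0$. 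Third --- and this is the step your ``direct unwinding'' claim glosses over --- the elements of $S_{L,v_0}$ are partial, with domains $[L']$ for varying $L'\leq L$, so $\alpha(v)$ must be padded with a fixed line $\widetilde 0$ in the trailing $L-L'$ coordinates, and the verification of \eqref{E:inter} must check that $\bar p\dowd\alpha(v)$ determines where the canonical composition $v\circ\phi(\bar p)$ cuts off; in the paper this length is recovered as the number of leading $K_0$'s of $\widetilde 0\circ p_{L'-L_0+1}$. None of this is deep, but it is the proof, and as written the proposal defers all of it.
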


\begin{proof} Recall first the conclusion of
Example~B6.
In this example, we have a family of Ramsey domain fulfilling (P)
$({\mathcal F}_0^{\otimes l}, {\mathcal S}_{K}^{\otimes l})$
parametrized by natural numbers $K$ and $l$. We claim that
$S_{L,v_0}$, with $v_0\colon [L_0]\to [K_0]$ for some $L_0\leq L$,
is interpretable in $({\mathcal F}_0^{\otimes (L-L_0)}, {\mathcal S}_{K_0}^{\otimes (L-L_0)})$, which will prove the lemma
by Proposition~\ref{P:inter}.

Set
\[
l = L-L_0.
\]
Take $(S_3)^l\in {\mathcal S}_{K_0}^{\otimes l}$ and define
\[
\alpha\colon S_{L,v_0}\to (S_3)^l
\]
as follows. For a natural number $0\leq k\leq K_0$, let
$\tilde{k}\in S_3$ be the function
\begin{equation}\notag
\widetilde{k}(x)=\begin{cases} K_0, &\text{if $x=1$;}\\
k, &\text{if $x=2$;}\\
\max(1, K_0- 1), &\text{if $x=3$.}
\end{cases}
\end{equation}
Let $v\in S_{L,v_0}$ be such that $v\colon [L']\to [K_0]$. Define
\[
\alpha(v) = (\widetilde{v(L_0+1)}, \widetilde{v(L_0+2)}, \dots,
\widetilde{v(L')}, \widetilde{0}, \dots, \widetilde{0}),
\]
where we put $L-L'$ entries $\widetilde{0}$ at the end of the above
formula. It is clear that if $\partial_cv_1=\partial_cv_2$ for $v_1,v_2\in S_{L,v_0}$,
then $\partial_\pi (\alpha(v_1)) = \partial_\pi(\alpha(v_2))$. So condition (i) from the
definition of interpretation holds.

We now check condition (ii). Note that $(\prod_{i\leq l} F_{N_i, M_i})\dbullet
(S_3)^l$ is defined precisely when $M_i=3$ for all $i\leq l$. Fix
therefore $\prod_{i\leq l} F_{N_i, 3}\in  {\mathcal
F}_0^{\otimes l}$. For $N=\sum_{i\leq l}N_i$, $F_{L_0+N, L, L_0}\dbullet S_{L,
v_0}$ is defined in $({\mathcal F}_1, {\mathcal S}_1)$. We need to
describe a function
\[
\phi\colon \prod_{i\leq l} F_{N_i, 3} \to F_{L_0+N, L, L_0}.
\]
Let
\[
{\bar p}= (p_1, \dots , p_l)\in \prod_{i\leq l} F_{N_i,3}.
\]
Since $v_0$ is a surjection, we can fix $l_0, l_1\in [L_0]$ so that
\[
v_0(l_0) =K_0\;\hbox{ and }\; v_0(l_1)=\max(1, K_0- 1).
\]
For $x\in [L_0]$, let
\[
\phi({\bar p})(x) = x,
\]
for $x\in [L_0+N_1+\cdots +N_i]\setminus [L_0+N_1+\cdots +N_{i-1}]$,
let
\begin{equation}\notag
\phi({\bar p})(x) =
\begin{cases}
l_0, &\text{if $p_i(x-(L_0+N_1+\cdots+N_{i-1}))=1$;}\\
L_0+i, &\text{if $p_i(x-(L_0+N_1+\cdots+N_{i-1}))=2$;}\\
l_1, &\text{if $p_i(x-(L_0+N_1+\cdots+N_{i-1}))=3$.}
\end{cases}
\end{equation}

We check that \eqref{E:inter} of the definition of interpretability
holds. Note that, for $v\in S_{L,v_0}$, with $v\colon [L']\to
[K_0]$, the sequence
\[
\phi({\bar p}) \dowd v = v\circ \phi({\bar p})
\]
is the concatenation of the sequences
\[
v_0,\; \widetilde{v(L_0+1)}\circ p_1,\; \widetilde{v(L_0+2)}\circ
p_2, \dots,\; \widetilde{v(L')}\circ p_{L'-L_0},\; (K_0, \dots,
K_0),
\]
where the sequence of $K_0$-s at the end has length equal to the
size of $p_{L'-L_0+1}^{-1}(1)$; in particular, it has length $0$ if
$L'=L$. On the other hand,
\begin{equation}\notag
\begin{split}
&{\bar p}\dowd \alpha(v) =\\
&(\widetilde{v(L_0+1)}\circ p_1, \widetilde{v(L_0+2)}\circ p_2,
\dots , \widetilde{v(L')}\circ p_{L'-L_0}, \widetilde{0}\circ
p_{L'-L_0+1}, \dots, \widetilde{0}\circ p_l).
\end{split}
\end{equation}
Thus, \eqref{E:inter} follows since the size of
$p_{L'-L_0+1}^{-1}(1)$ is the number of entries equal to $K_0$ at
the beginning of $\widetilde{0}\circ p_{L'-L_0+1}$, and therefore
the above formula determines $\phi({\bar p}) \dowd v$.
\end{proof}

Theorem~\ref{T:main2} applied to the Ramsey domain
$({\mathcal F}_1, {\mathcal S}_1)$, which has (P) by Lemma~\ref{L:preHJ}, gives directly the Hales--Jewett
theorem as stated at the beginning of this subsection; we apply
Theorem~\ref{T:main2} with $t=K_0-1$, to $S_{K,v_0}\in {\mathcal
S}_1$ with $v_0\colon [L_0]\to [K_0]$.

\subsection{The Graham--Rothschild theorem}\label{Su:GR}

We outline here a proof of the Graham--Rothschild theorem, both the
original version \cite{GrRo71} and the partial rigid surjection
version isolated by Voigt \cite[Theorem 2.9]{Vo80}. Here are the two
statements that were already recalled in Subsection~\ref{Su:refor}.

\medskip

\noindent {\bf Graham--Rothschild.} {\em Given $d>0$, $K_0\leq K$,
$L_0\leq L$ and a rigid surjection $s_0\colon [L_0]\to [K_0]$, there
exists $M\geq L_0$ with the following property. For each
$d$-coloring of
\[
\{ s\colon [M]\to [K]\mid s\in {\rm RS}\hbox{ and }
s\upharpoonright [L_0] = s_0\}
\]
there is a rigid surjection $t_0\colon [M]\to [L]$, with
$t_0\upharpoonright [L_0] ={\rm id}_{[L_0]}$, such that
\[
\{ s\circ t_0\mid s\colon [L]\to [K],\, s\in {\rm RS}\hbox{ and }
s\upharpoonright [L_0] = s_0\}
\]
is monochromatic.}

\medskip
\noindent {\bf Graham--Rothschild, Voigt's version.} {\em Given
$d>0$, $K_0\leq K$, $L_0\leq L$ and a rigid surjection $s_0\colon
[L_0]\to [K_0]$, there exists $M\geq L_0$ with the following
property. For each $d$-coloring $c$ of
\[
\{ s\colon [M']\to [K]\mid L_0\leq M'\leq M\hbox{ and }
s\upharpoonright [L_0] = s_0\}
\]
there exist $M_0'$ and a rigid surjection $t_0\colon [M_0']\to [L]$,
with $L_0\leq M_0'\leq M$ and $t_0\upharpoonright [L_0] ={\rm
id}_{[L_0]}$, such that
\[
\{ s\circ t_0\mid s\colon [L']\to [K],\, L_0\leq L'\leq L ,\, s\in
{\rm RS}, \hbox{ and } s\upharpoonright [L_0] = s_0\}
\]
is monochromatic.}

\medskip

We use the Ramsey domain $({\mathcal F}_2, {\mathcal S}_2)$ over the composition space $(A_2, X_2)$
as defined in Subsection~\ref{Su:exact}. It is not difficult to
check from the Hales--Jewett theorem as stated in
Subsection~\ref{Su:HJ} that property (LP) holds for $({\mathcal F}_2, {\mathcal S}_2)$. So we have the following lemma.

\begin{lemma}
$({\mathcal F}_2, {\mathcal S}_2)$ fulfills (LP).
\end{lemma}

In the proof of the above lemma, the following obvious
observation plays a crucial role. If $s$ and $t$ are two non-empty rigid
surjections with
\[
\partial_f s = \partial_f t\colon [L]\to [K],
\]
then not only $s\res [L]= t\res [L]$, but also
\[
s\upharpoonright [L+1] = t\upharpoonright [L+1]
\]
as $s(L+1) = t(L+1) = K+1$.

We note the obvious facts that $({\mathcal F}_2, {\mathcal S}_2)$
is vanishing and linear. Now an application of Corollary~\ref{C:mainco2} to
$({\mathcal F}_2, {\mathcal S}_2)$ (using Lemmas ~\ref{L:22} and
\ref{L:g22}) yields the two versions of the Graham--Rothschild
theorem as stated at the beginning of this subsection.

\subsection{The self-dual Ramsey theorem}\label{Su:selpr}

We prove Theorem~\ref{T:RGR}. Recall the definition of connections
and their multiplication from Subsection~\ref{Su:self}. First we
state a reformulation of Theorem~\ref{T:RGR} and the usual partial
function version of this reformulation. We follow these
reformulations with an explanation of how the first one implies
Theorem~\ref{T:RGR}. Then, we give arguments for the two statements.

\medskip
\noindent {\bf Self-dual Ramsey theorem.} {\em Given $d>0$, $0<K\leq
L$, there exists $M$ with the following property. For each
$d$-coloring of
\[
\{ (s,p)\in {\rm AS}\mid (s,p)\colon [M]\to [K] \hbox{ and }
s^{-1}(K) = \{ M\}\}
\]
there is an augmented surjection $(t_0, q_0)\colon [M]\to [L]$ such
that $t_0^{-1}(L)=\{ M\}$ and
\[
\{ (t_0, q_0)\cdot (s,p)\mid (s,p)\colon [L]\to [K],\, (s,p)\in
{\rm AS}\hbox{ and } s^{-1}(K) = \{ L\}\}
\]
is monochromatic.}

\medskip

\noindent {\bf Self-dual Ramsey theorem; partial augmented
surjection version.} {\em Given $d>0$, $K\leq L$, there exists $M$
with the following property. For each $d$-coloring of
\[
\{ (s,p)\in {\rm AS}\mid (s,p)\colon [M']\to [K]\hbox{ for some }
M'\leq M \}
\]
there is an augmented surjection $(t_0, q_0)\colon [M_0']\to [L]$
for some $M_0'\leq M$ such that
\[
\{ (t_0, q_0)\cdot (s,p)\mid (s,p)\in {\rm AS},\, (s,p)\colon
[L']\to [K]\hbox{ for some }L'\leq L\}
\]
is monochromatic.}

\medskip

To obtain Theorem~\ref{T:RGR} from the first of the above
statements, associate with an increasing surjection $p\colon [L]\to
[K]$ an increasing injection $i_p\colon [K]\to [L]$ given by
$i_p(x)=\max p^{-1}(x)$. If $(s,p)$ is an augmented surjection with
$s,p\colon [L]\to [K]$, for some $0<K\leq L$, and $s^{-1}(K)=\{
L\}$, then
\[
(s\upharpoonright [L-1], i_p\upharpoonright [K-1])
\]
is a connection between $[L-1]$ and $[K-1]$ and each connection
between $[L-1]$ and $[K-1]$ is uniquely representable in this way.
Moreover, if $(t,q)$ is another augmented surjection with $t,q\colon
[M] \to [L]$ and with $t^{-1}(L) = \{ M\}$, then
\begin{equation}\notag
\begin{split}
((s\circ t)\upharpoonright [M-1]&, i_{p\circ q}\upharpoonright
[K-1])\\
&= (t\upharpoonright [M-1] , i_q\upharpoonright [L-1])\cdot
(s\upharpoonright [L-1], i_p\upharpoonright [K-1]),
\end{split}
\end{equation}
where the multiplication on the right hand side is the
multiplication of connections. These observations show that the first
of the above statements implies Theorem~\ref{T:RGR}.

The following lemma will turn out to be an immediate consequence of
the Hales--Jewett theorem and the Voigt version of the
Graham--Rothschild theorem.

\begin{lemma}\label{L:hjas}
$({\mathcal F}_3, {\mathcal S}_3)$ fulfills (LP).
\end{lemma}

\begin{proof} A moment of thought (using the way $\partial$ acts on subsets of
$X_2$) convinces us that to see (${\rm LP}$) it suffices to show
Conditions 1 and 2 below, for $L\geq K>0$ and $d>0$. To state these
conditions, fix $(s_0,p_0)\in {\rm AS}$, $s_0, p_0\colon [L_0]\to
[K-1]$ for some $L_0<L$. The role of the elements $x$ and $a$ in
(${\rm LP}$) is played by $(s_0, p_0)$ and $({\rm id}_{[L_0]}, {\rm
id}_{[L_0]})$, respectively. Note that $(t,q)$ from $F_{M,L}$ or $G_{M,L}$ extends
$({\rm id}_{[L_0]}, {\rm id}_{[L_0]})$ precisely when
$t\upharpoonright [L_0] = {\rm id}_{[L_0]}$ and
$q\upharpoonright [L_0+1] = {\rm id}_{[L_0+1]}$.

{\bf Condition 1.} {\em There exists $M\geq L$ such that for each
$d$-coloring of
\[
F_{M,L}.\{ (s,p)\in F_{L,K}\mid \partial (s,p) = (s_0,p_0)\}
\]
there exists $(t_0, q_0)\in F_{M,L}$ such that
\[
t_0\upharpoonright [L_0] = {\rm id}_{[L_0]}\;\hbox{ and }\;
q_0\upharpoonright [L_0+1] = {\rm id}_{[L_0+1]}
\]
and
\[
\{ (t_0,q_0)\dowd (s,p)\mid (s,p)\in F_{L,K},\, \partial (s,p) =
(s_0,p_0)\}
\]
is monochromatic.}

This statement amounts to proving the following result.

\noindent {\em There exists $M> L_0$ such that for each $d$-coloring
of all rigid surjections $t\colon [M-1]\to [K-1]$ with
$t\upharpoonright [L_0] = s_0$ there exists a rigid surjection
$t_0\colon [M-1]\to [L-1]$ such that $t_0\upharpoonright [L_0] =
{\rm id}_{[L_0]}$ and
\[
\{ s\circ t_0\mid s\colon [L-1]\to [K-1]\hbox{ a rigid surjection
and }s\upharpoonright [L_0] = s_0\}
\]
is monochromatic.}

This is a special case of the Hales--Jewett theorem, as stated and
proved in Subsection~\ref{Su:HJ}.

{\bf Condition 2.} {\em There exists $M\geq L$ such that for each
$d$-coloring of
\[
G_{M,L}.\{ (s,p)\in G_{L,K}\mid \partial (s,p) = (s_0,p_0)\}
\]
there exists $(t_0, q_0)\in G_{M,L}$ such that
\[
t_0\upharpoonright [L_0] = {\rm id}_{[L_0]}\;\hbox{ and }\;
q_0\upharpoonright [L_0+1] = {\rm id}_{[L_0+1]}
\]
and
\[
\{ (t_0,q_0)\dowd (s,p) \mid (s,p)\in G_{L,K},\, \partial (s,p) =
(s_0,p_0)\}
\]
is monochromatic.}

We will produce a sequence of statements the last of which will be Condition 2.

Fix $L_1\geq K$. We will specify later how large $L_1$ should be.
Statement (A) below follows from the Graham--Rothschild
theorem as stated in Subsection~\ref{Su:GR} in the same way as Condition 1 above
follows the Hales--Jewett theorem; we obtain $M-1$ from the Graham--Rothschild
theorem applied to $d$, $K$, $L_1$ and $s_0$.

\noindent (A) {\em There exists $M>L_0$ such that for each $d$-coloring
of all augmented surjections from $[M]$ to $[K]$
there exists an augmented  surjection $(t_1,q_1) \colon [M]\to [L_1]$
such that $t_1\upharpoonright [L_0] = {\rm id}_{[L_0]}$, $q_1\upharpoonright [L_0+1] = {\rm id}_{[L_0+1]}$, and
\[
\{ (t_1,q_1)\dowd (s,p) \mid (s,p)\colon [L_1]\to [K] \hbox{ and } \partial(s,p)= (s_0,p_0)\}
\]
is monochromatic. }

Using statement (A), we show below how to obtain the following statement.

\noindent (B) {\em There exists $M>L_0$ with the following property. For each $d$-coloring
of all augmented surjections from $[M']$ to $[K]$, where $M'$ is such that $L_0<M'\leq M$,
there exists an augmented
surjection $(t_1,q_1) \colon [M]\to [L_1]$
such that $t_1\upharpoonright [L_0] = {\rm id}_{[L_0]}$, $q_1\upharpoonright [L_0+1] = {\rm id}_{[L_0+1]}$, and
for each $L'$ with $L_0< L'\leq L_1$ the set
\[
\{ (t_1,q_1)\dowd (s,p) \mid (s,p)\colon [L']\to [K] \hbox{ and } \partial(s,p)= (s_0,p_0)\}
\]
is monochromatic. }

Statement (B) is clear for $L_1=L_0+1$. To move from $L_1$ to $L_1+1$ apply the inductive assumption to get $M_1$ that works
for $L_1$. Now apply statement (A) to $M_1+1$ (playing the role of $L_1$) obtaining $M$. One checks that this $M$ works for
statement (B). Indeed, assume we have a $d$-coloring $c$ of all augmented surjections from $[M']$ to $[K]$, where $M'$ is such that $L_0<M'\leq M$.
By (A), we find $(t_1,q_1)\colon [M]\to [M_1+1]$ such that
$t_1\upharpoonright [L_0] = {\rm id}_{[L_0]}$, $q_1\upharpoonright [L_0+1] = {\rm id}_{[L_0+1]}$, and
\[
\{ (t_1,q_1)\dowd (s,p) \mid (s,p)\colon [M_1+1]\to [K] \hbox{ and } \partial(s,p)= (s_0,p_0)\}
\]
is monochromatic. We color all the augmented surjections $(t,q)\colon [M']\to [K]$, where $L_0<M'\leq M_1$ by
letting
\[
c'((t,q)) = c((t_1,q_1)\dowd (t,q)).
\]
By the choice of $M_1$, there exists $(t_1',q_1')\colon [M_1]\to [L_1]$ such that the conclusion of (B) holds for it.
Let $(t_1'', q_1'')\colon [M_1+1]\to [L_1+1]$ be given by
\[
t_1'' \upharpoonright [M_1]= t_1',\, q_1''\upharpoonright [M_1]= q_1',\hbox{ and }t_1''(M_1+1) = q_1''(M_1+1) = L_1+1.
\]
Then it is easy to see that the augmented surjection $(t_1,q_1)\dowd (t_1'', q_1'')$ witnesses the conclusion of (B)
for coloring $c$.

\noindent (C) {\em There exists $M>L_0$ with the following property. For each $d$-coloring
of all augmented surjections from $[M']$ to $[K]$, where $L_0<M'\leq M$,
there exists an augmented
surjection $(t_0,q_0) \colon [M]\to [L+1]$
such that $t_0\upharpoonright [L_0] = {\rm id}_{[L_0]}$, $q_0\upharpoonright [L_0+1] = {\rm id}_{[L_0+1]}$, and
the set
\[
\{ (t_0,q_0)\dowd (s,p) \mid (s,p)\colon [L']\to [K],\, L_0< L'\leq L, \hbox{ and } \partial(s,p)= (s_0,p_0)\}
\]
is monochromatic. }

To see (C), first pick $L_1$ so that for each $d$-coloring of $[L_1-L_0]$, there is a subset of $[L_1-L_0]$
with $L+1-L_0$ elements that is monochromatic. Apply (B) to $L_1$ obtaining $M$. This $M$ works for (C).
Indeed, for a $d$-coloring as in (C) (and in (B)), we get $(t_1, q_1)\colon [M]\to [L_1]$ as in
the conclusion of (B). Now we use the choice of $L_1$ to find $(t_2, q_2)\colon [L_1]\to [L+1]$ with
$t_2=q_2$ and $q_2\upharpoonright [L_0+1] = {\rm id_{[L_0+1]}}$, and so that $(t_0, q_0)$ defined to be $(t_1,q_1)\dowd (t_2,q_2)$
is as required by (C).

Now, (C) is easily seen to be just a reformulation of Condition 2. Thus, (${\rm LP}$) holds and the lemma follows.
\end{proof}

Since $({\mathcal F}_3, {\mathcal S}_3)$ is clearly vanishing,
by Lemmas~\ref{L:as}  and \ref{L:SD}, Corollary~\ref{C:mainco} can
be applied to $({\mathcal F}_3, {\mathcal S}_3)$ yields the statements from the beginning
of this subsection.

\section{Walks, a limiting case}\label{S:wal}

In this section, we give a natural limiting example of the extent
of condition (R). The motivation for
this example comes from \cite{IrSo05} and is related to a problem of
Uspenskij \cite{Us00}.

Recall that walks were defined in Subsection~\ref{Su:injsur}, and the set of all walks was denoted there by $W$.
Let $C=Z=W$. We note that $C\subseteq A_2$ and $Z\subseteq X_2$, as
defined in Subsection~\ref{Su:exba}. We equip $C$ with the
multiplication inherited from $A_2$ and we take the action
of $C$ on $Z$ to be the one inherited from $(A_2, X_2)$. Note also
that $Z$ is closed under the forgetful truncation $\partial_f$ with
which $(A_2, X_2)$ is equipped. We take it as the truncation on $(C,
Z)$. We also consider the function $|\cdot |$ defined on $X_2$, we
restrict it to $Z$, and denote it again by $|\cdot |$. The following
lemma is an immediate consequence of Lemma~\ref{L:22}.

\begin{lemma}\label{L:44}
$(C, Z)$ with the operations defined above is a normed composition space.
\end{lemma}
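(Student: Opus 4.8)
The plan is to exhibit $(C,Z)$ as a substructure of the normed background $(A_2,X_2)$ and then invoke Lemma~\ref{L:22}. Since $C\subseteq A_2$ and $Z\subseteq X_2$, and since the multiplication, action, truncation, and norm on $(C,Z)$ are by definition the restrictions to $W$ of the corresponding operations on $(A_2,X_2)$, every defining identity of a normed background---the associativity condition \eqref{E:acct}, the truncation--action compatibility \eqref{E:acrestr}, and the norm condition---holds for $(C,Z)$ simply because it holds for $(A_2,X_2)$ and the operations agree wherever both are defined. Thus the only thing that requires checking is that $C$ and $Z$ are \emph{closed} under these operations, so that the restricted partial functions genuinely take values in $W$.

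Closure under multiplication and action reduces to a single fact already recorded above: if $s$ and $t$ are walks and the canonical composition $s\circ t$ is defined, then $s\circ t$ is again a walk. Indeed, for $s_1,s_2\in C$ we have $s_1\cdot s_2=s_2\circ s_1$, and for $t\in C$, $s\in Z$ we have $t\dowd s=s\circ t$; in both cases the result lies in $W$ by this closure property. Hence $C$ is closed under $\cdot$ and $Z$ is closed under the action of $C$.

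For truncation I would check that $\partial_f s\in W$ whenever $s\in W$. Writing $s\colon[L]\to[K]$, the forgetful truncation $\partial_f s$ is the restriction of $s$ to an initial segment $[L_0-1]$ of its domain, onto $[\max(0,K-1)]$ (the empty function when $K=0$, covered by \eqref{E:tr1b}). A restriction of a walk to an initial segment of its domain still satisfies $s(1)=1$ when that segment is nonempty, since $s(1)=1<K$ forces $1<L_0$, and it inherits the Lipschitz condition $|s(i)-s(j)|\le 1$ for $|i-j|\le 1$; surjectivity onto $[K-1]$ holds because, $s$ being rigid, the initial segment preceding the first occurrence of the maximal value $K$ already maps onto $[K-1]$. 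The only mild subtlety is this edge behaviour of $\partial_f$, and it is routine. Finally, the norm on $Z$ is literally the restriction of the norm on $X_2$, so its defining property is inherited verbatim. With closure established, Lemma~\ref{L:22} immediately yields that $(C,Z)$ is a normed background.
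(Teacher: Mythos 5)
Your proposal is correct and follows essentially the same route as the paper: the paper likewise derives Lemma~\ref{L:44} as an immediate consequence of Lemma~\ref{L:22}, having first recorded that walks are closed under canonical composition and under the forgetful truncation $\partial_f$, so that $(C,Z)$ inherits all the required properties as a substructure of $(A_2,X_2)$. Your explicit verification that $\partial_f s$ remains a walk is a detail the paper merely asserts, and it checks out.
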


Let ${\mathcal H} = {\mathcal W}$ consist of all finite non-empty subsets $F$ of $\rm W$
such that for some $K>0$ each element of $F$ is a walk with range $[K]$. We write
$r(F)=K$. We write $d(F)=L$ if $L$ is the largest natural number such that there is a walk
in $F$ whose domain is $[L]$. We call $F$ {\em tidy} if each walk in $F$ has domain $[d(F)]$.
For $F_{1}, F_{2}\in {\mathcal H} = {\mathcal W}$,
$F_{1}\bullet F_{2}$ and $F_{1}\dbullet F_{2}$ are defined
if and only if $d(F_2)=r(F_1)$ and $F_2$ being tidy implies that $F_1$ is tidy.
Then we let
\[
F_{1}\bullet F_{2} = F_{1}\dbullet F_{2} = F_{1}\cdot F_{2} = F_{1}\dowd F_{2}.
\]
One easily checks the following lemma.

\begin{lemma}\label{L:455}
$({\mathcal H}, {\mathcal W})$ with $\bullet$
and $\dbullet$ is a Ramsey domain over $(C,Z)$.
\end{lemma}

The Ramsey statement equivalent to
this Ramsey domain fulfilling condition (R)
was motivated by a question of Uspenskij \cite{Us00}, which asked if the
universal minimal flow of the homeomorphism group of the pseudo-arc
is the pseudo-arc itself together with the natural action of the
homeomorphism group.
Uspenskij's question would have had a positive answer if this Ramsey statement were true. (The Ramsey statement
would imply such an answer by \cite{IrSo05} and by a dualization of the techniques of \cite{KePeTo05}.)
However, the theorem below implies that the Ramsey statement is false.

\begin{theorem}\label{T:color}
For every $M\geq 3$ there exists a $2$-coloring of all walks from
$[M]$ to $[3]$ such that for each walk $t\colon [M]\to [6]$ the set
\[
\{ s\circ t\mid s\colon [6]\to [3]\hbox{ a walk}\}
\]
is not monochromatic.
\end{theorem}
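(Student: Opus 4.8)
The plan is to produce an explicit $2$-coloring and then, for each walk $t$, to point to two walks $s_1,s_2\colon[6]\to[3]$ whose images receive different colors. First note that a surjective walk $t\colon[M]\to[6]$ can exist only for $M\ge 6$, so for $3\le M\le 5$ the statement is vacuous and any coloring works; assume $M\ge 6$. Two observations organize the argument. Since $t$ is onto $[6]$, the equality $s_1\circ t=s_2\circ t$ forces $s_1=s_2$ on $\mathrm{range}(t)=[6]$; thus $s\mapsto s\circ t$ is injective and the set to be colored is, for every $t$, a faithful copy of the fixed finite family of walks $[6]\to[3]$, so it suffices to separate a single pair. Second, the number of level changes transforms linearly: writing $V(w)=|\{i<M:w(i)\ne w(i+1)\}|$ for a walk $w\colon[M]\to[3]$, one has $V(s\circ t)=\sum_{j\in E(s)}d_j$, where $E(s)=\{j<6:s(j)\ne s(j+1)\}$ is the set of non-flat edges of $s$ and $d_j$ is the number of steps of $t$ across the edge $\{j,j+1\}$ of $[6]$.

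The first coloring to try is $c(w)=V(w)\bmod 2$. Because $t$ starts at $1$ and ends at $\ell:=t(M)$, the crossing count $d_j$ is odd exactly when $j<\ell$, whence $c(s\circ t)\equiv|E(s)\cap\{1,\dots,\ell-1\}|\pmod 2$. For $\ell\ge 2$ this is non-constant in $s$: for $\ell=2$ the value is $[\,s(2)=2\,]$, realized with both parities by $s=1,2,3,3,3,3$ and $s=1,1,2,3,3,3$, and for each $\ell\in\{3,4,5,6\}$ a similarly direct choice of two walks gives opposite parities. Hence $c$ already defeats every $t$ that does not return to its initial level.

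The heart of the matter, and the step I expect to be the main obstacle, is the \emph{closed} walks, those with $t(M)=1$. For these every $d_j$ is even, so $V(s\circ t)$ is even for all $s$ and $c$ is constant on the block. This failure is robust rather than accidental: inserting flat steps into $t$ rescales all visit counts, and traversing an excursion $1\!\uparrow\!6\!\downarrow\!1$ twice makes every $d_j$ divisible by $4$, so any statistic obtained by counting a pulled-back feature modulo $2^k$ is forced to be constant on the corresponding block; moreover a $2$-fold symmetric $t=t_0t_0$ forces an even value for any statistic that is additive over the two identical halves. Thus the device that handles closed walks must be invariant under stuttering and non-additive over repeated sub-walks. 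I would construct it as a stuttering-invariant statistic $\gamma$ on reduced walks (a walk with its consecutive repeats deleted) that records the \emph{arrangement}, not the \emph{number}, of the excursions of $w$ to level $3$, and whose parity is flipped by replacing $s$ with a walk that inserts or deletes a single ``crook'' at the top levels $5,6$ of $[6]$—levels which $t$ must visit because it is onto $[6]$.

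With such a $\gamma$ in hand, the final coloring is $c'(w)=V(w)+\gamma(w)\bmod 2$, and the verification stays local: for an arbitrary $t$ with $t(M)\ge 2$ the term $V$ already separates some pair, while for closed $t$ one exhibits walks $s_1,s_2$ that differ only in their labelling of levels $5,6$, checks that $\widehat{s_1\circ t}$ and $\widehat{s_2\circ t}$ differ by exactly one top crook, and concludes $\gamma(s_1\circ t)\ne\gamma(s_2\circ t)$. Since the index family of walks $[6]\to[3]$ is fixed and finite and only the coarse shape of the image near level $3$ enters, this reduces to a finite check. The genuinely delicate point—absorbing most of the work—is to design $\gamma$ so that it is at once stuttering-invariant, impervious to excursion-doubling, and still flippable by a single modification of $s$ at the top for every closed $t$.
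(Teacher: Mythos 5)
Your handling of the easy cases is sound: the statement is vacuous for $M\le 5$, and for $t$ with $t(M)=\ell\ge 2$ the identity $V(s\circ t)=\sum_{j\in E(s)}d_j$ together with $d_j\equiv[\,j<\ell\,]\pmod 2$ does make the parity of the number of level changes non-constant on $\{s\circ t\}$. But the argument stops exactly where the theorem begins. The whole difficulty is the case $t(M)=1$, and there you construct nothing: $\gamma$ is specified only by a list of desiderata (stuttering-invariance, imperviousness to excursion-doubling, flippability by a single ``top crook'' for \emph{every} closed $t$), and you yourself flag its design as the point ``absorbing most of the work.'' A wish-list is not a construction, and nothing in the proposal shows that such a $\gamma$ exists. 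There is also a secondary gap in the verification plan: once the coloring is $c'=V+\gamma\bmod 2$, the fact that $V$ alone separates a pair when $t(M)\ge 2$ no longer suffices --- you would additionally need $\gamma$ to be constant on that pair (or redo the separation for $V+\gamma$), which again hinges on the unconstructed $\gamma$.

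For comparison, the paper uses a single explicit coloring that handles all $t$ at once: $c(u)=a(u)\bmod 2$, where $a(u)$ counts the indices $y$ with $u(y)=1$, $u(y+1)=2$ and $u(x)\le 2$ for all $x\le y$, i.e.\ the up-steps from $1$ to $2$ occurring \emph{before the first visit to level $3$}. That truncation at the first visit to the top level is precisely what defeats the doubling and stuttering obstructions you correctly identify. The witness family is the four walks $s$ with $s(1)=1$, $s(2)=s(5)=2$, $s(6)=3$ and $s(3),s(4)\in\{1,2\}$ (so the modifications are made in the middle of $[6]$, not at levels $5,6$ as you propose), and the contradiction comes from decomposing the part of $t$ preceding its first visit to $6$ into maximal intervals mapped into $\{3,4\}$, classifying them by their boundary values in $\{2,5\}$ into classes $P_1,P_2,Q_1,Q_2$, and exploiting the identity $|P_1|-|P_2|=1$, which forces an odd quantity that monochromaticity would make even. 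The missing $\gamma$ in your outline would have to do essentially this job, and finding it is the theorem.
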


\begin{proof} We show a bit more: to contradict monochromaticity we
only need a set of walks $s\colon [6]\to [3]$ that differ at two
elements of their common domain. Let
\[
S = \{ s\colon [6]\to [3]\mid s(1)=1, s(2)=s(5)=2, s(6)=3, s(3),
s(4)\in \{1,2\}\}.
\]
Clearly each element of $S$ is a walk from $[6]$ to $[3]$. We claim
that for each $M\geq 3$ there is a $2$-coloring of all walks from
$[M]$ to $[3]$ such that for each walk $t\colon [M]\to [6]$ the set
$S\circ t$ is not monochromatic.

Let $M\geq 3$. For a walk $u\colon [M]\to [3]$ define
\[
a(u) = |\{ y\in [M]\mid u(x)\leq 2 \hbox{ for all }x\leq y,\;
u(y)=1, \hbox{ and }u(y+1) = 2\}|.
\]
Define a $2$-coloring $c$ by letting
\[
c(u)=a(u)\mod 2.
\]
We claim that this coloring is as required.

Let $t\colon [M]\to [6]$ be a walk. We analyze $t$ in order to
compute $a(s\circ t)$ for $s\in S$ in terms of certain numbers
associated with $t$. Let $M_0\leq M$ be the smallest natural number
with $t(M_0)=6$. There exist unique, pairwise disjoint intervals
$I\subseteq [M_0]$ that are maximal with respect to the property
$t(I)\subseteq \{3,4\}$. For such an $I$, let $I^-$ and $I^+$ be
$(\min I)-1$ and $(\max I)+1$, respectively. Note that $t(I^-),
t(I^+)\in \{ 2,5\}$. We distinguish four types of such intervals
$I$:
\begin{equation}\notag
\begin{split}
I\in P_1&\Longleftrightarrow t(I^-)=2\hbox{ and }t(I^+)=5;\\
I\in P_2&\Longleftrightarrow t(I^-)=5\hbox{ and }t(I^+)=2;\\
I\in Q_1&\Longleftrightarrow t(I^-)=t(I^+)=2;\\
I\in Q_2&\Longleftrightarrow t(I^-)=t(I^+)=5.
\end{split}
\end{equation}
Note right away that since $t$ is a walk, $t(1)=1$, $t(M_0-1)=5$,
and $t(x)\in \{ 1, \dots, 5\}$ for $x\in [M_0-1]$, it follows that
$|P_1|-|P_2|=1$ and therefore
\begin{equation}\label{E:sump}
|P_1|+|P_2| \hbox{ is odd.}
\end{equation}

For each $I\in P_1\cup P_2\cup Q_1\cup Q_2$, define $a_t(I)$ as
follows.
\begin{equation}\notag
a_t(I)=\begin{cases} |\{ x\in I\mid t(x)=3,\, t(x+1)=4\}|,&
\text{ if $I\in P_1\cup Q_1$;}\\
|\{ x\in I\mid t(x)=4,\, t(x+1)=3\}|,& \text{ if $I\in P_2\cup
Q_2$.}
\end{cases}
\end{equation}
Note that for $I$ in $Q_1$ or $Q_2$, the two cases in the above
definition give the same value for $a_t(I)$. Further, let
\[
a_t(*) = |\{ x\in [M_0]\mid t(x)=1,\, t(x+1)=2\}|.
\]

Recall now the set $S$ introduced in the beginning of the proof. We can write $S = \{ s_1, s_2, s_3, s_4\}$, where $s_1,\, s_2,\,
s_3$ and $s_4$ are determined by the conditions
\begin{equation}\notag
\begin{split}
&s_1(3)=s_1(4)=1,\\
&s_2(3)=2,\, s_2(4)=1,\\
&s_3(3)=1,\, s_3(4)=2, \hbox{ and}\\
&s_4(3)=s_4(4)=2.
\end{split}
\end{equation}
An inspection convinces us that
\begin{equation}\notag
\begin{split}
a&(s_i\circ t)\\
&= \begin{cases} a_t(*)+ \sum_{I\in P_1\cup P_2\cup
Q_1\cup Q_2} 1,&\text{if $i=1$;}\\
a_t(*)+\sum_{I\in P_1\cup P_2}a_t(I)+\sum_{I\in Q_1}a_t(I)+\sum_{I\in
Q_2} (a_t(I)+1), &\text{if $i=2$;}\\
a_t(*)+\sum_{I\in P_1\cup P_2}a_t(I)+\sum_{I\in Q_1}(a_t(I)+1)
+\sum_{I\in Q_2} a_t(I), &\text{if $i=3$;}\\
a_t(*), &\text{if $i=4$.}
\end{cases}
\end{split}
\end{equation}

Assume towards a contradiction that a walk $t\colon [M]\to [6]$ is
such that $S\circ t$ is monochromatic. It follows from the above
expressions for $a(s_i\circ t)$ that the numbers
\begin{equation}\notag
\begin{split}
&\sum_{I\in P_1\cup P_2\cup Q_1\cup Q_2} 1, \\
&\sum_{I\in P_1\cup P_2}a_t(I)+\sum_{I\in Q_1}a_t(I)+\sum_{I\in
Q_2} (a_t(I)+1),\\
&\sum_{I\in P_1\cup P_2}a_t(I)+\sum_{I\in Q_1}(a_t(I)+1)
+\sum_{I\in Q_2} a_t(I),\\
&\hbox{ and }0
\end{split}
\end{equation}
have the same parity, that is, since the last number is $0$, they must
all be even. Now it follows from the first line that
\begin{equation}\label{E:sume}
|P_1|+|P_2|+|Q_1|+|Q_2|\hbox{ is even}
\end{equation}
and, by subtracting the second line from the third one, that
$|Q_1|-|Q_2|$ is even, and so
\begin{equation}\label{E:suml}
|Q_1|+|Q_2|\hbox{ is even.}
\end{equation}
Equations \eqref{E:sume} and \eqref{E:suml} imply that the natural
number $|P_1|+|P_2|$ is even contradicting \eqref{E:sump}.
\end{proof}

\section{A problem}\label{S:pro}
Observe that the failure of a Ramsey result from
Section~\ref{S:wal}, the classical Ramsey theorem, and the
Graham--Rothschild theorem can be viewed in a uniform way as
statements about composition spaces that are closely related to each other.
Fix $A$ so that ${\rm IS}\subseteq A\subseteq {\rm RS}$, where the
families of increasing surjections ${\rm IS}$ and of rigid
surjections ${\rm RS}$ are defined in Subsection~\ref{Su:injsur}.
Assume that for $s, t\in A$ with $s\colon [L]\to [K]$ and $t\colon
[N]\to [M]$ with $L\leq M$, the canonical composition $s\circ t$, as
defined in Subsection~\ref{Su:canco}, is in $A$. Assume further that
$\partial_f s\in A$ for $s\in A$, where the forgetful truncation
$\partial_f$ is defined in Subsection~\ref{Su:extrnc}. Consider
the composition space $(A,A)$ in which multiplication on $A$ is the same as
the action of $A$ on $A$, both are defined precisely when $s\circ t$
is defined and are given by
\[
t\cdot s = t\dowd s = s\circ t,
\]
and in which the truncation operator is given by $\partial_f$.
The following problem presents itself.

\noindent {\em Find all sets $A$ as above such that the following Ramsey statement holds:
given $d>0$ and $K,L$ there is $M$ such that for each $d$ coloring of
$\{ s\in A\mid s\colon [M]\to [K]\}$ there exists $t_0\in A$ with $t_0\colon[M]\to [L]$ and such that
\[
\{ s\circ t_0\mid s\in A,\, s\colon [L]\to [K]\}.
\]
is monochromatic.}

Note that if $A={\rm RS}$ or $A= {\rm IS}$, then the answer is positive.
In the first case, the resulting Ramsey theorem is the dual Ramsey
theorem (see Subsection~\ref{Su:GR}) and in the second case it is a
reformulation of the classical Ramsey theorem (see Example A5 in
Subsection~\ref{S:aux}). Note also that, by Theorem~\ref{T:color},
if $A$ is the set of all walks ${\rm W}$, then the answer is negative.

\medskip

\noindent {\bf Acknowledgement.} I would like to thank the referee for a very thorough reading of the
paper and for suggesting a number of important changes to the presentation of the material in it.
I also thank Alekos Kechris and Min Zhao for their remarks on an earlier version of the paper.

\end{document}